\newtheoremstyle{note}{12pt}{12pt}{}{}{\bfseries}{.}{.5em}{}
\title{\LARGE\textbf{A Phase Transition for Circle Maps and Cherry Flows}}
\author{Liviana Palmisano\\ Universit\'e Paris-Sud\\ 91405 Orsay, France}
\newtheorem{theo}[equation]{Theorem}
\newtheorem{prop}{Proposition}
\newtheorem{defin}[equation]{Definition}
\newtheorem{cor}[equation]{Corollary}
\newtheorem{fact}[equation]{Fact}
\numberwithin{equation}{section}
\newtheorem{lemma}[equation]{Lemma}
\def\HD{{\mathrm{dim_H}}}
\begin{document}
\maketitle
\author
\begin{abstract}
We study $C^{2}$ weakly order preserving circle maps with a flat interval. The main result of the paper is about a sharp transition from degenerate geometry to bounded geometry depending on the degree of the singularities at the boundary of the flat interval. We prove that the non-wandering set has zero Hausdorff dimension in the case of degenerate geometry and it has Hausdorff dimension strictly greater than zero in the case of bounded geometry.
Our results about circle maps allow to establish a sharp phase transition in the dynamics of Cherry flows. 
\end{abstract}
\section{Introduction}
\subsection{Almost Smooth Maps with a Flat Interval}\label{class}

We consider the class $\mathscr{L}$ of continuous circle endomorphisms $f$ of degree one for which an open arc $U$ exists such that the following properties hold:\newline

\begin{enumerate}
\item The image of $U$ is one point.\newline
\item The restriction of $f$ to $\mathbb{S}^{1}\setminus{\overline{U}}$ is a $C^{2}$-diffeomorphism onto its image.\newline
\item Let $\left(a,b\right)$ be a preimage of $U$ under the projection of the real line of $\mathbb{S}^{1}$.
On some right-sided neighborhood of $b$, $f$ can be represented as
\begin{displaymath}
h_{r}\left(\left(x-b\right)^{l_{r}}\right),
\end{displaymath}
where $h_{r}$ is a $C^{2}$-diffeomorphism on an open neighborhood of $b$.\\
Analogously, there exists a $C^{2}$-diffeomorphism on an left-sided neighborhood of $a$ such that $f$ is of the form
\begin{displaymath}
h_{l}\left(\left(a-x\right)^{l_{l}}\right).
\end{displaymath}
\end{enumerate}
The ordered pair $\left(l_{l},l_{r}\right)$ will be called the $critical$ $exponent$ of the map. If $l_{l}=l_{r}$ the map will be referred to as $symmetric$.\newline
\subsection{Rotation Number}
Let $f:\mathbb{S}^1\rightarrow\mathbb{S}^1$ be a map in $\mathscr{L}$ and $F:\mathbb{R}\rightarrow\mathbb{R}$ a lift of $f$. Then
\begin{displaymath}
\rho(F):=\lim_{n\to\infty}\frac{1}{n}\left(F^{n}(x)-x\right)
\end{displaymath}
exists for all $x\in\mathbb R$. \\
$\rho(F)$ is independent of $x$ and well defined up to an integer; that is, if $\tilde{F}$ is another lift of $f$, then $\rho(F)-\rho(\tilde{F})=F-\tilde{F}\in\mathbb Z$.\\
This remark justifies the following terminology:
\begin{defin}
$\rho(f):=\left[\rho\left(F\right)\right]$ is called the $rotation$ $number$ of $f$.
\end{defin}

In the discussion that follows in this paper, it will often be convenient to identify $f$ and $F$ and subsets of $\mathbb S^1$ with the corresponding subsets of $\mathbb R$.\\
Also we permanently assume that the rotation number is $irrational$. 
\subsection{Combinatorics}
The irrational rotation number $\rho(f)$ can be written as an infinite continued fraction
\begin{displaymath}
\rho(f)=\frac{1}{a_1+\frac{1}{a_2+\frac{1}{\cdots}}}.
\end{displaymath}
where $a_i$ are positive integers.\\

If we cut off the portion of the continued fraction beyond the $n$-th position, and write the resulting fraction in lowest terms as $\frac{p_n}{q_n}$ then the numbers $q_n$ for $n\geq 1$ satisfy the recurrence relation
\begin{displaymath}
q_{n+1} = a_{n+1}q_n+q_{n-1};\textrm{  } q_0 = 1;\textrm{  } q_1 = a_1.
\end{displaymath}

The number $q_n$ is the iterate of the rotation by $\rho(f)$ for which the orbit of any point makes the closest return so far to the point itself.
%By the Poincar\'e Theorem, maps from our class are semi-conjugate to a rotation. In particular, this implies the same order of orbits for both $f$ and
%the rotation by $\rho(f)$. 
\subsection{Discussion of the Results}
Our class $\mathscr{L}$ is designed to comprise the first return maps for smooth Cherry flows (see section \ref{Cherry flow}). Following the terminology for these flows we will distinguish three cases and we will classify the maps in $\mathscr{L}$ depending on their critical exponent $\left(l,l\right)$:
\begin{itemize}
\item the non-dissipative case if $l<1$;
\item the conservative case if $l=1$;
\item the dissipative case if $l>1$.
\end{itemize}

In \cite{e} which deals with the non-dissipative case it is proved that the non-wandering set $\Omega$ has zero Hausdorff dimension. In the conservative case, $\HD(\Omega)=0$, \cite{s}. The dissipative case is studied in \cite{on} and \cite{y}. In particular in \cite{y}, it is proved that $\Omega$ has zero Lebesgue measure. In the dissipative case, under the assumption that the rotation number is of bounded type (i.e. $\sup_i a_i<\infty$) $\HD(\Omega)<1$, \cite {a}.\newline
\subsection{Statement of Results}
We investigate symmetric almost smooth maps with a flat interval and with critical exponent $(l,l)$, $l > 1$ (dissipative case) and we get results about their non-wandering set, which, by Theorem C in \cite{on}, is exactly $\mathbb{S}^{1}\setminus\bigcup^{\infty}_{i=0}f^{-i}(U)$. \\ 
Also, we permanently assume that the rotation number is irrational and of bounded type.
\begin{theo}\label{1}
If $f$ is a map from the class $\mathscr{L}$ with the critical exponent $l\in \left(1,2\right]$, then the Hausdorff dimension of the non-wandering set is equal to 0.
\end{theo}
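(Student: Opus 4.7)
The plan is to cover the non-wandering set by the elements of the standard dynamical partitions generated by the orbit of the flat interval, and then to show that for $l\in(1,2]$ these elements shrink fast enough that the sum of their $s$-th powers tends to zero for every $s>0$. Let $U_n$ be the union of $U$ and its first $q_n+q_{n-1}$ preimages under $f$; by the Theorem C of \cite{on} recalled in Section 1.5, $\Omega\subset\mathbb S^1\setminus U_n$ at every level. The $q_n+q_{n-1}$ connected components of $\mathbb S^1\setminus U_n$ form the dynamical partition $\mathcal P_n$, and the bounded-type hypothesis $\sup_i a_i<\infty$ gives $\#\mathcal P_n\le C\lambda^{n}$ for uniform constants $C>0$ and $\lambda>1$ depending only on $\sup a_i$.

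The heart of the argument is then a uniform bound on $\ell_n:=\max_{I\in\mathcal P_n}|I|$. Away from $\overline U$ the map $f$ is a $C^2$ diffeomorphism, so cross-ratio (real Koebe) distortion controls iterates along orbits that stay away from $\partial U$; the only genuine contraction comes from the power-law singularity, where $f$ behaves like $x\mapsto x^l$. A renormalization/induction argument, tracking how often an orbit re-enters a definite neighborhood of $\partial U$ within one renormalization cycle, should yield in the regime $l\in(1,2]$ a super-exponential estimate
\[
\ell_n\le\exp\bigl(-c\,\gamma^{n}\bigr)
\]
for some $c>0$ and $\gamma>1$ depending only on $l$ and $\sup a_i$. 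The threshold $l=2$ is precisely where the contraction coming from the power-law singularity still beats the combinatorial growth $\lambda^{n}$ of $\#\mathcal P_n$; beyond $l=2$ the two effects balance and bounded geometry takes over. Once such a bound is available, for every fixed $s>0$ one immediately gets
\[
\mathcal H^{s}_{\ell_n}(\Omega)\le\#\mathcal P_n\cdot\ell_n^{\,s}\le C\lambda^{n}\exp\bigl(-cs\gamma^{n}\bigr)\longrightarrow 0,
\]
so $\mathcal H^{s}(\Omega)=0$ for every $s>0$ and hence $\HD(\Omega)=0$.

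\emph{Main obstacle.} The entire difficulty lies in proving the super-exponential decay of $\ell_n$ in the dissipative range $l\in(1,2]$. Because the power-law singularity is now contracting rather than expanding, the classical Schwarzian or purely Koebe arguments used for $l<1$ in \cite{e} and for $l=1$ in \cite{s} do not transfer directly: one cannot invoke uniform non-linearity to conclude that long iterates contract by a fixed factor, since the flat interval prevents the return map from being expanding in any obvious sense. The delicate part is to quantify, within each renormalization cycle, how many deep returns to a small neighborhood of $\partial U$ occur and how the contraction from the $l$-power accumulates against the many nearly-neutral diffeomorphic pieces. Carrying out this book-keeping, presumably through a careful analysis of the scaling ratios $|I^{n+1}|/|I^{n}|$ and of the ``long'' versus ``short'' intervals in $\mathcal P_n$, is where the critical exponent $l=2$ emerges as a sharp threshold, and I expect it to be the technical heart of the whole proof.
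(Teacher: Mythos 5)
Your covering scheme and the final summation are exactly the paper's closing computation: cover $\Omega$ by the gaps of the dynamical partitions, use bounded type to control the branching, and show that the $s$-sums tend to zero. But the proposal reduces the theorem to precisely the statement it would need to prove, and then stops: the ``super-exponential estimate'' on the maximal gap length is conjectured, not established, and you yourself flag it as the main obstacle. This is a genuine gap, because that estimate is the entire content of the theorem; everything else is routine. Moreover, the mechanism you sketch (counting deep returns to a neighborhood of $\partial U$ within a renormalization cycle) is not how the threshold $l=2$ actually arises, and the specific bound you posit, $\ell_n\le\exp(-c\gamma^n)$ with $\gamma>1$, is stronger than what is true here; the real decay is of the order $\lambda^{cn^2}$, which is still enough to beat the exponential cardinality of $\mathscr P_n$.

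What the paper does instead is quote the scaling estimate from Theorem 2 of the flat-interval paper of Graczyk--Jonker--\'Swi\c atek--Tangerman--Veerman: for $l\le 2$ the critical scalings satisfy $\alpha_n=\left|\left(\underline{-q_n},\underline{0}\right)\right|/\left|\left[\underline{-q_n},\underline{0}\right)\right|\le\lambda^n$ for some $\lambda<1$ (the Appendix removes the negative-Schwarzian hypothesis via the cross-ratio $\textbf{Poin}$ inequality). The threshold $l=2$ enters there through a recursive inequality of the shape $\alpha_n^{\,l}\le K_n\,\alpha_{n-2}^{2}$, so for $l\le2$ the exponent on the right dominates and $\alpha_n\to0$ exponentially, while for $l>2$ bounded geometry is possible. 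This critical-point information is then propagated to every gap of the partition by a Koebe-type distortion argument (Proposition \ref{Koebe liv} and Lemma \ref{brava!}), yielding Lemma \ref{sole}: each gap of $\mathscr P_{n-2}$ splits into gaps of $\mathscr P_n$ of relative size at most $C\lambda^{n-1}$, and into at most $a_n(a_{n+1}+1)+1$ pieces. Iterating this gives the vanishing of the $s$-dimensional sums for every $s>0$. To complete your proposal you would either have to reprove this scaling estimate and its transfer to arbitrary gaps, or explicitly invoke these results; as written, the argument assumes the theorem's quantitative core.
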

\begin{theo}\label{2}
If $f$ is a map from the class $\mathscr{L}$ with the critical exponent $l> 2$, then the Hausdorff dimension of the non-wandering set is strictly greater than 0.
\end{theo}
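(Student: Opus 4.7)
The goal is to exhibit a Cantor subset of $\Omega=\mathbb{S}^{1}\setminus\bigcup_{i\ge 0}f^{-i}(U)$ whose Hausdorff dimension is bounded below by a positive constant. The plan has two main ingredients: first, establish \emph{bounded geometry} of the dynamical partition when $l>2$; second, use this to build a self-similar Cantor set inside $\Omega$ and lower bound its Hausdorff dimension via a Moran-type mass distribution argument.

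I would fix the following notation. Let $I_{n}$ denote the interval between $b$ and $f^{q_{n}}(b)$, and let $\mathcal{P}_{n}$ be the partition (minus the relevant preimages of $U$) into iterates $f^{i}(I_{n})$ with $0\le i<q_{n+1}$ together with $f^{j}(I_{n-1})$ for $0\le j<q_{n}$, a standard construction for circle maps. The bounded-type assumption on $\rho(f)$ keeps each combinatorial depth $a_{n+1}$ uniformly bounded, which in turn bounds the number of pieces of $\mathcal{P}_{n+1}$ inside each piece of $\mathcal{P}_{n}$. The heart of the argument is to prove that for $l>2$ the scaling ratios
$$
\tau_{n}:=\frac{|I_{n+1}|}{|I_{n}|}
$$
lie in a fixed interval $[\lambda,1-\lambda]\subset(0,1)$, with $\lambda$ independent of $n$. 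To this end I would decompose $f^{q_{n}}$ near $b$ into one application of the singular form $h_{r}((x-b)^{l})$ (resp.\ $h_{l}((a-x)^{l})$) followed by a long diffeomorphic iterate. The diffeomorphic part is controlled by the standard cross-ratio and Koebe distortion inequalities for $C^{2}$ maps with negative Schwarzian in disguise, while the singular part contributes a factor $\sim\varepsilon^{l}$ to intervals of length $\varepsilon$ abutting $b$. It is in the balance between the singular contraction and the diffeomorphic expansion of return that the exponent $l=2$ shows up as the critical threshold, as in the companion Theorem~\ref{1}.

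Given $\tau_{n}\in[\lambda,1-\lambda]$, I would construct a Cantor subset of $\Omega$ inductively. Starting from an interval $J_{0}$ of definite size contained in $\Omega$, at each step every surviving interval $J$ of generation $n$ contains at least $N\ge 2$ disjoint sub-intervals $J_{1},\dots,J_{N}$ of a later generation lying in $\Omega$, each of length at least $c|J|$ with $c\in(0,1)$ uniform; the existence of two such children with comparable size is forced by the bounded-type hypothesis and the two-sided bound $\tau_{n}\le 1-\lambda$. The mass distribution principle, applied to the Borel probability measure which assigns mass $N^{-n}$ to each generation-$n$ piece, then yields
$$
\HD(\Omega)\ge\frac{\log N}{\log(1/c)}>0,
$$
which is the desired conclusion.

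The main obstacle is clearly the bounded geometry estimate $\tau_{n}\ge\lambda$. The standard cross-ratio tools give \emph{some} a priori distortion bounds on the diffeomorphic part of $f^{q_{n}}$, but converting them into a sharp dependence on the critical exponent $l$ requires delicate asymptotic analysis of the singular map near the endpoints $a,b$, and a precise control of how the images $f^{i}(I_{n})$ accumulate on the flat interval. Showing that the contraction $\varepsilon\mapsto\varepsilon^{l}$ is \emph{weak enough} when $l>2$ to keep $\tau_{n}$ away from zero, uniformly in $n$ under the bounded-type hypothesis on $\rho(f)$, is the technical crux of the proof; everything else is then a routine application of the mass distribution principle.
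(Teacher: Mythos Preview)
Your overall architecture---bounded geometry plus a mass distribution argument---is exactly the paper's strategy, and the final Moran/mass-distribution step is essentially what the paper does (it builds a measure $\mu$ on the gap algebra with $\mu(I_i^n)\le 2^{-n/2}$ and $|I_i^n|\ge\lambda_1^n$, then applies the mass distribution principle). So the skeleton is right.

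However, your allocation of difficulty is inverted. You identify ``$\tau_n\ge\lambda$'' as the technical crux and call everything else ``a routine application of the mass distribution principle.'' In the paper the lower bound $\tau_n\ge\lambda$ for $l>2$ is \emph{not} proved; it is quoted as the second claim of Theorem~\ref{sw}, already established in \cite{a}. What the paper actually works for in Section~5 is the step you gloss over: showing that \emph{every} gap of $\mathscr{P}_n$---not just the one abutting the flat interval---has children of definite relative size. The quantity $\tau_n$ only measures geometry at the critical point; to get the uniform lower bound $|I_i^n|\ge\lambda_1^n$ needed for the mass distribution inequality you must propagate the bounded geometry from $\underline{0}$ to all other gaps. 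This is the content of Lemma~\ref{aiuto}, Lemma~\ref{big}, Lemma~\ref{help}, and finally Proposition~\ref{flat} (any two adjacent gaps of $\mathscr{P}_n$ are comparable), and it requires repeated, carefully chosen applications of the Koebe principle (Proposition~\ref{Koebe liv}) together with Theorem~\ref{sw}. Your sentence ``the existence of two such children with comparable size is forced by the bounded-type hypothesis and the two-sided bound $\tau_n\le 1-\lambda$'' skips precisely this work; bounded type alone bounds the \emph{number} of children, not their relative sizes, and the bound on $\tau_n$ says nothing directly about gaps far from the flat interval.

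A second, smaller point: your heuristic for why $l=2$ is the threshold (``balance between singular contraction $\varepsilon\mapsto\varepsilon^l$ and diffeomorphic expansion'') is not the mechanism in \cite{a}. The actual argument runs through a recursive inequality of the form $\alpha_n^l\le C\,\alpha_{n-2}^2$ (cf.\ the Appendix here), and it is the comparison of the exponents $l$ and $2$ in that recursion that produces the phase transition. Your sketch of how you would prove $\tau_n\ge\lambda$ does not come close to this, so if you intend to include a self-contained proof rather than cite \cite{a}, that part needs substantial rethinking.
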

These results show a transition of geometry in the dissipative case as the critical exponent of our map passes through $2$.\newline
As a corollary to Theorem \ref{2} we find an important property of Cherry flows:
\begin{theo}\label{3}
Let $X$ be a Cherry vector field with  $\lambda_1>0>\lambda_2$ as eigenvalues of the saddle point and let $f$ be its first return map.\\
If  $\left|\lambda_2\right|>2\lambda_1$ and if $f$ has rotation number of bounded type, then the quasi-minimal set has Hausdorff dimension strictly greater than $1$. 
\end{theo}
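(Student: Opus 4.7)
The plan is to translate the eigenvalue hypothesis into a statement about the critical exponent of the first return map, apply Theorem~\ref{2}, and then pass from the Poincar\'e section to the flow by a local product argument.

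\textbf{Reduction to the circle map.} As recalled in section~\ref{Cherry flow}, a Poincar\'e return map $f$ for a Cherry vector field, taken on a circle transverse to the unstable separatrix of the saddle, lies in the class $\mathscr{L}$; the flat interval $U$ consists of those points on the section whose forward orbit falls into the stable manifold of the saddle. To compute the critical exponent I would linearise $X$ at the saddle and track a trajectory entering the linearising box at $(u_0,1)$, which evolves as $(u_0 e^{\lambda_1 t}, e^{\lambda_2 t})$ and exits when $|u|=1$. The exit time is $T=\lambda_1^{-1}\log(1/|u_0|)$ and the exit height is $e^{\lambda_2 T}=|u_0|^{|\lambda_2|/\lambda_1}$. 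Composing with the $C^{2}$ passages from the section to the entry face and back from the exit face to the section gives the representation $f(x)=h_r((x-b)^l)$ with $l=|\lambda_2|/\lambda_1$. Thus $|\lambda_2|>2\lambda_1$ is equivalent to $l>2$, and together with the bounded-type assumption on $\rho(f)$ this places us in the setting of Theorem~\ref{2}, which yields $\HD(\Omega)>0$ for the non-wandering set $\Omega$ of $f$.

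\textbf{From the section to the flow.} The quasi-minimal set $\Lambda$ is the $X$-saturation of $\Omega$ together with the saddle and its invariant separatrices (a piece of Hausdorff dimension~$1$). Fix $\epsilon>0$ and a compact $K\subset \Omega$ on which the first return time to the section is bounded by a uniform constant. On such a $K$ the map
\[
\Psi\colon K\times[0,\epsilon]\longrightarrow \Lambda,\qquad (x,t)\longmapsto \Phi^{t}(x),
\]
is bi-Lipschitz onto its image, because the derivatives of $\Phi^{t}$ are uniformly bounded away from $0$ and $\infty$ on this region. Since Hausdorff dimension is bi-Lipschitz invariant and satisfies the product lower bound, one gets
\[
\HD(\Lambda)\;\ge\;\HD\bigl(\Psi(K\times[0,\epsilon])\bigr)\;=\;\HD\bigl(K\times[0,\epsilon]\bigr)\;\ge\;\HD(K)+1,
\]
so any choice of such a $K$ with $\HD(K)>0$ finishes the proof.

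\textbf{Main obstacle.} The passage from the section to the flow is the only place where genuine extra work beyond Theorem~\ref{2} is required. The set $\Omega$ accumulates on $\partial U$, which is exactly the locus along which the first return time of $\Phi^{t}$ blows up, so one cannot take $K=\Omega$ directly. One therefore has to inspect the proof of Theorem~\ref{2} and extract, inside the Cantor construction realising $\HD(\Omega)>0$, a sub-Cantor set whose generating pieces sit at scales where the return time is uniformly bounded. The bounded-geometry nature of that construction, which is precisely what produces positive dimension when $l>2$, should deliver a compact sub-Cantor set of positive dimension bounded away from $\partial U$; rendering this selection formal is the essential additional step in the argument.
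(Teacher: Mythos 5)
Your argument follows the same route as the paper's: the saddle eigenvalues give the critical exponent $l=\left|\lambda_2\right|/\lambda_1$ of the first return map (this is property 4 of Section \ref{Cherry flow}, which the paper quotes rather than re-derives), so $\left|\lambda_2\right|>2\lambda_1$ together with the bounded-type hypothesis puts $f$ in the setting of Theorem \ref{2}, and the passage from $\HD(\Omega)>0$ to $\HD(Q)>1$ is a local product argument near the section. The paper does this last step in one line, by quoting Fact \ref{prod} (near the Poincar\'e section $Q$ is $C^2$-equivalent to $I\times\Omega$) together with the product inequality $\HD(A\times B)\geq\HD(A)+\HD(B)$ (Theorem 8.10 of \cite{ka}); your flow-box construction realises exactly this product structure by hand, so in substance the two proofs coincide.

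The one point to correct is your ``main obstacle'': it is not there, and the extra selection step you postpone is unnecessary. The map $\Psi(x,t)=\Phi^t(x)$ on $\Sigma\times[0,\epsilon]$ has nothing to do with the first return time: the flow of a $C^{\infty}$ field on the compact torus is smooth for all time with uniformly bounded derivatives on $\Sigma\times[0,\epsilon]$, the field is transverse to the compact section $\Sigma$, and there is a uniform $t_0>0$ such that no trajectory starting on $\Sigma$ meets $\Sigma$ again before time $t_0$; hence for $\epsilon<t_0$ the map $\Psi$ is a diffeomorphism of $\Sigma\times[0,\epsilon]$ onto a one-sided collar of $\Sigma$, and therefore bi-Lipschitz on the compact set $\Omega\times[0,\epsilon]$ with $K=\Omega$ itself, no bounded-return-time condition being needed. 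The blow-up of the return time near $\partial U$ would only matter if you flowed each point until its next return, which you do not. What you do need is $\Phi^t(\Omega)\subset Q$ for $t\in[0,\epsilon]$, i.e. $\Omega\subset Q$ and flow-invariance of $Q$; this is precisely the identification contained in Fact \ref{prod} (equivalently, Theorem C of \cite{on} together with the density of the recurrent orbit in the Cantor set $\Omega$), the same input the paper uses. Finally, even if one insisted on staying away from $\partial U$, countable stability of Hausdorff dimension immediately yields compact sets $K_n=\{x\in\Omega: \mathrm{dist}(x,\partial U)\geq 1/n\}$ with $\sup_n\HD(K_n)=\HD(\Omega)$, so no inspection of the Cantor construction behind Theorem \ref{2} would be required in any case. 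With these observations your proof closes and is essentially the paper's.
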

\section{Technical Tools}
\subsection{Basic Notations} 
We will introduce a simplified notation for backward and forward images of the flat interval $U$. Instead of $f^{i}\left(U\right)$ we will simply write $\underline{i}$; for example, $\underline{0}=U$. Thus, for us, underlined positive integers represent points, and underlined non-positive integers represent intervals.
\subsection {Distance between Points}
We denote by $\left(a,b\right)=\left(b,a\right)$ the open shortest interval between $a$ and $b$ regardless of the order of these two points. The length of that interval in the natural metric on the circle will be denoted by $\left|a-b\right|$. Following \cite{a}, let us adopt now these conventions of notation:\newline
\begin{itemize}
\item $\left|\underline{-i}\right|$ stands for the length of the interval $\underline{-i}$.
\item Consider a point $x$ and an interval $\underline{-i}$ not containing it. Then the distance from $x$ to the closer endpoint of $\underline{-i}$ will be denoted by $\left|\left(x,\underline{-i}\right)\right|$, and the distance to the more distant endpoint by $\left|\left(x,\underline{-i}\right]\right|$.
\item We define the distance between the endpoints of two intervals $\underline{-i}$ and $\underline{-j}$ analogously. For example, $\left|\left(\underline{-i},\underline{-j}\right)\right|$ denotes the distance between the closest endpoints of these two intervals while $\left|\left[\underline{-i},\underline{-j}\right)\right|$ stands for $\left|\underline{-i}\right|+\left|\left(\underline{-i},\underline{-j}\right)\right|$.
\end{itemize}
\section {Scaling near Critical Point}
We define a sequence of scalings
\begin{displaymath}
\tau_{n}:=\frac{\left|(\underline 0,\underline{q_n})\right|}{\left|(\underline 0,\underline{q_ {n-2}})\right|}. 
\end{displaymath}
These quantities measure `the geometry' near to a critical point.\newline
When $\tau_n\rightarrow 0$ we say that the geometry of the mapping is `degenerate'.\\
When $\tau_n$ is bounded away from zero we say that the geometry is `bounded'. \\

The following sequence will also be frequently used:
\begin{displaymath}
\alpha_n:=\frac{\left|(\underline{-q_n},\underline{0})\right|}{\left|[\underline{-q_ {n}},\underline{0})\right|}.
\end{displaymath}
Along with $\tau_n$ these ratios serve as scalings relating the geometries of successive dynamic partitions and moreover $\alpha_n>\tau_n$.\\

The proof of our results is based on the following theorem.\\
\begin{theo}\label{sw}
\begin{enumerate}
\item If the critical exponent $l\leq 2$, then the scalings $\tau_n$ tend to zero at least exponentially fast.
\item For maps with rotation number of bounded type and with critical exponent $l > 2$ the sequence $\tau_n$ is bounded away from zero.
\end{enumerate}
\end{theo}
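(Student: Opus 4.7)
The plan is to derive a renormalization-style recursion for the closest-return distances $A_n := |(\underline 0,\underline{q_n})|$ (equivalently for $\tau_n = A_n/A_{n-2}$) that isolates the role of the critical exponent $l$. Write the return map as $T_n := f^{q_{n-1}} = f^{q_{n-1}-1}\circ f$. Near the right endpoint $b$ of $\underline 0$, the inner factor is exactly the singular part $\epsilon\mapsto h_r(\epsilon^{l})$, while $f^{q_{n-1}-1}$ is a $C^{2}$ diffeomorphism on the image. Applying the Koebe distortion principle to $f^{q_{n-1}-1}$ on a suitable interval containing $f(b)$ and $f(\underline{q_{n-2}})$, and writing $D_n$ for the derivative of $f^{q_{n-1}-1}$ at $f(b)$ (controlled up to uniformly bounded factors by standard real bounds), the return map takes the form
\begin{displaymath}
T_n(b+\epsilon) - b \;=\; D_n\,\epsilon^{l} \;-\; A_{n-1} \;+\; \text{l.o.t.},
\end{displaymath}
since $T_n(b) = \underline{q_{n-1}}$ lies on the opposite side of $b$ at distance $A_{n-1}$. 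Iterating $T_n$ exactly $a_n$ times starting from $\epsilon_0 = A_{n-2}$ yields $\epsilon_{a_n} = A_n$, giving an algebraic recursion relating $A_n$, $A_{n-1}$, $A_{n-2}$, $D_n$, and $a_n$.

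For Part 1 ($l\le 2$) the fact that the power-law term must dominate $A_{n-1}$ for $A_n$ to stay positive already forces $D_n A_{n-2}^{l} \gtrsim A_{n-1}$. Dividing by $A_{n-2}$ and taking logarithms converts the recursion into a linear inequality for $\log\tau_n$ with strictly negative drift precisely when $l\le 2$, so $l=2$ is the exact threshold and exponential decay follows; large values of $a_n$ only accelerate this decay, since every additional application of $T_n$ is contracting and no hypothesis on the $a_n$ is needed. For Part 2 ($l>2$, bounded type) the same recursion admits an attracting positive fixed scale $\tau^{*}$, and one shows $\tau_n \asymp \tau^{*}$ by induction. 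The bounded-type assumption enters twice: it caps $a_n$ uniformly so that distortion of the $a_n$-fold iterate of $T_n$ does not accumulate, and it is needed to close the bootstrap that controls $D_n$ via real bounds on the dynamic partition. Any subsequence $\tau_{n_k}\to 0$ would then force an impossibly delicate balance between $A_{n-1}$ and $D_n A_{n-2}^{l}$, contradicting those bounds.

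The principal obstacle is obtaining uniform control on $D_n$, the derivative of $f^{q_{n-1}-1}$ at $f(b)$, which depends on the geometry of the dynamic partition at all earlier levels. The natural strategy is a bootstrap in which real bounds for the partition yield bounds on $D_n$, which in turn refine the real bounds and hence the estimates on $\tau_n$. Carrying this through cleanly at the borderline $l=2$ (where the recursion is only marginally contracting) and in Part 2 (where the balance between the contracting term $D_n\epsilon^{l}$ and the drift $A_{n-1}$ must be preserved over all $a_n$ iterates of $T_n$) is the delicate technical core of the argument.
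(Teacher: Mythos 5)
There is a genuine gap: the mechanism that actually produces the threshold at $l=2$ is never derived in your sketch. In the paper (following \cite{a}) the dichotomy rests on a recursive inequality of the shape $(\alpha_n)^l\leq K_{0,n}\cdots K_{a_n-1,n}C_n\tilde M_n(l)\,\alpha_{n-2}^2$ (inequality (\ref{alfa})), where the exponent $2$ on the right comes from the cross-ratio $\textbf{Poin}$ (a product of two ratios) pulled through the $a_n$ intermediate returns by the $C^2$ cross-ratio distortion theorem (Theorem \ref{point}); it is the comparison of $l$ against this $2$ that gives exponential decay for $l\leq 2$ and permits bounded geometry for $l>2$. In your recursion $\epsilon_{j+1}=D_n\epsilon_j^{\,l}-A_{n-1}+\dots$ no step introduces the exponent $2$: the assertion that taking logarithms yields ``strictly negative drift precisely when $l\leq 2$'' is exactly the statement to be proved, and the positivity constraint you invoke does not imply it. Worse, the model itself is not justified. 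First, $f^{q_{n-1}}(b^+)=\underline{q_{n-1}}$ lies on the \emph{opposite} side of the flat interval, so the additive term measured from $b$ is $A_{n-1}+|\underline{0}|$, which is of unit size; positivity then only gives $D_nA_{n-2}^{\,l}\gtrsim|\underline{0}|$, which says nothing about $\tau_n$. Second, there is no single constant $D_n$: $f^{q_{n-1}-1}$ maps the tiny interval adjacent to the critical value $\underline{1}$ onto the macroscopic interval $(\underline{q_{n-1}},\underline{q_{n-1}+q_{n-2}})$, which contains the whole flat interval, so its derivative is far from constant on the relevant range and Koebe control in the form of Proposition \ref{Koebe liv} does not apply; this is precisely why the paper and \cite{a} replace a distortion constant by $\textbf{Poin}$ estimates at each of the $a_n$ steps, with the correction factors $K_{i,n}$, $C_n$ tamed by the exponential decay of gaps (Corollary \ref{gap}, Lemma \ref{esp}). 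Third, the interval $(\underline{0},\underline{q_{n-2}})$ contains the preimage $\underline{-q_{n-1}}$, so the condition that the iterates stay on one side of the flat interval involves the backward-orbit geometry (the quantities $\alpha_n$), not merely $A_{n-1}$.

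Your proposed control of $D_n$ by ``standard real bounds'' is also circular in this setting: whether the geometry near the flat interval is bounded or degenerate is exactly the content of Theorem \ref{sw}, so real bounds cannot be assumed as input; the paper's unconditional inputs are instead Proposition \ref{figo}, Corollary \ref{gap} and Lemma \ref{cielo}. Likewise, in Part 2 the existence of an attracting fixed scale $\tau^{*}$ and the closing of the bootstrap are asserted rather than proved. For comparison, the paper's own proof is a citation: Part 2 is taken entirely from \cite{a}, and Part 1 is taken from \cite{a} in the negative-Schwarzian case, with the Appendix removing that hypothesis via Theorem \ref{point} and the convergence of the correction factors; since $\alpha_n>\tau_n$, decay of $\alpha_n$ yields decay of $\tau_n$.
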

\begin{proof}
The second claim is completely proved in \cite{a}, while the first claim is proved under the additional assumption that the Schwarzian derivative is strictly smaller than zero. For the general case, the details are provided in the Appendix.  
\end{proof}
%\subsection {Uniform Constants}
%The letter $K$ with subscripts will be reserved for `uniform constants'. If we claim a statement which involves such constants we mean precisely that for each occurrence of such a constant a positive value %can be inserted which will make the statement true. The choice is uniform in the sense that once $f$ has been fixed, there is a choice of values which makes the statement true in all cases covered. The use %of the symbol $K$ will be local, in that the same symbol may signify different uniform constants in different parts of the paper.

%\begin{fact}\label{fat}
%Let three points with $y$ between $x$ and $z$ be arranged so that, of the three, the point %$x$ is the closest to the flat interval. If $f$ is a diffeomorphism on $(x,z)$, the following %inequality holds:
%\begin{displaymath}
%\frac{\left|f(z)-f(y)\right|}{\left|f(z)-f(x)\right|}\leq K\frac{\left|y-z\right|}{\left|z-x\right|},
%\end{displaymath}
%where $K$ is a uniform constant.
%\end{fact}
The next proposition will be used to prove the mains results of this paper.
\begin{prop}\label{Koebe liv}
Let $f$ be a function in $\mathscr{L}$ and let $J\subset T$ be two intervals of the circle.\\
Suppose that, for some $n\in\mathbb N$
\begin{itemize}
\item[-] $f^n$ is a diffeomorphism on $T$;
\item[-] $\sum_{i=0}^{n-1}\left|f^{i}(J)\right|$ is bounded;
\item[-] $\left|f^n\left(J\right)\right|\leq K dist\left(f^n\left(J\right),\partial f^n\left(T\right)\right)$ with $K$ a positive constant.
\end{itemize}
 Then, there exists a constant $C$ such that, for every two intervals $A$ and $B$ in $J$
\begin{displaymath}
\frac{\left|f^n\left(A\right)\right|}{\left|f^n\left(B\right)\right|}\geq C\frac{\left|A\right|}{\left|B\right|}.
\end{displaymath}

\end{prop}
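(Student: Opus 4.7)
The proposition is a Koebe-type distortion bound for $f^n$ along the orbit of $J$. The plan is to show that $Df^n$ has bounded ratio on $J$, i.e.\ there is $C_0>0$ with $C_0^{-1}\leq Df^n(x)/Df^n(y)\leq C_0$ for all $x,y\in J$. Given such a bound, the conclusion is immediate from the mean value theorem: for $A,B\subset J$ one gets $|f^n(A)|/|f^n(B)|\geq C_0^{-1}\cdot |A|/|B|$, which is the claim with $C=C_0^{-1}$.

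The starting identity is the logarithmic chain rule
\begin{displaymath}
\log\frac{Df^n(x)}{Df^n(y)}=\sum_{i=0}^{n-1}\log\frac{Df(f^i(x))}{Df(f^i(y))}.
\end{displaymath}
Because $f^n$ is a diffeomorphism on $T$, each image $f^i(T)$ is disjoint from the flat interval $U$, so $f$ is $C^2$ on $f^i(T)$. By the mean value theorem applied to $\log Df$ on $f^i(T)$, each term is bounded in absolute value by $\|(\log Df)'\|_{L^\infty(f^i(T))}\cdot |f^i(J)|$.

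The delicate point is that $(\log Df)'=D^2f/Df$ blows up as one approaches $\partial U$, since $Df$ vanishes there with order $l-1$. So the $L^\infty$ norms appearing above must be controlled by bounding the distance from $f^i(T)$ to $\partial U$ in terms of the geometry of $f^i(T)$ itself. This is exactly where the $K$-space hypothesis enters: the fact that $f^n(J)$ has space of comparable size to itself inside $f^n(T)$ can be propagated backward through the $C^2$ diffeomorphism $f^{n-i}|_{f^i(T)}$, via the usual Koebe bootstrap, to yield $K'$-space for each $f^i(J)$ inside $f^i(T)$, with $K'$ depending only on $K$ and on the bound for $\sum_{i=0}^{n-1}|f^i(J)|$. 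In particular $f^i(T)$ remains proportionally far from $\partial U$, which gives a uniform bound for $\|(\log Df)'\|_{L^\infty(f^i(T))}$.

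Summing the $n$ contributions and invoking the bounded-sum hypothesis then yields the desired uniform control of $\log(Df^n(x)/Df^n(y))$ on $J$. The main obstacle is the absence of negative Schwarzian together with the singularity at $\partial U$: one cannot simply invoke the classical Schwarzian-derivative Koebe lemma, and must instead couple a $C^2$ non-linearity estimate with a careful backward propagation of the space condition. Balancing these two ingredients — enough space at every stage to keep each constant bounded, small enough total distortion to keep the chain-rule sum convergent — is the technical heart of the argument and the main point where care is required.
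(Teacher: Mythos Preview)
The paper's own proof is a one-line citation: the proposition is recorded as a direct consequence of the Koebe principle in \cite{h}. Your proposal instead sketches why such a principle should hold, but the sketch does not close.

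The gap is in the handling of iterates that come close to $\partial U$. You correctly note that $(\log Df)'=D^2f/Df$ blows up there and propose to control it by propagating the space condition backward. Even granting uniform $K'$-space for every $f^i(J)$ inside $f^i(T)$, all this yields is $\mathrm{dist}(f^i(J),\partial U)\geq |f^i(J)|/K'$ (since $\partial U$ lies outside $f^i(T)$), and hence $\|(\log Df)'\|_{L^\infty(f^i(J))}\leq CK'/|f^i(J)|$. The $i$-th term in your chain-rule sum is then bounded only by a constant, not by a multiple of $|f^i(J)|$, so the total is of order $n$ rather than controlled by $\sum_i|f^i(J)|$. Separately, your backward propagation of space through ``the $C^2$ diffeomorphism $f^{n-i}|_{f^i(T)}$'' already presupposes a distortion bound for the long composition $f^{n-i}$, which is precisely what you are trying to prove; the argument is circular as written.

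The standard route (as in \cite{h}) works with cross-ratios rather than with $\log Df$. Near $\partial U$ one factors $f=h\circ q$ with $q(x)=(x-b)^l$; the power map $q$ has negative Schwarzian and therefore \emph{expands} cross-ratios, so passage near the critical point improves the relevant quantity rather than degrading it. The $C^2$ diffeomorphic pieces distort cross-ratios by at most a factor $1+O(|\text{interval}|)$, and it is these benign contributions, summed against the bounded total length, that produce the final constant. This structural use of the power law --- not a uniform pointwise bound on the nonlinearity --- is the missing ingredient in your sketch.
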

\begin{proof}
It follows from the Koebe principle of \cite{h}.\\
\end{proof}
\subsection {Continued Fractions and Partitions}
Since $f$ is order-preserving and has no periodic points, there exists an order-preserving and continuous map $h:\mathbb{S}^{1}\rightarrow\mathbb{S}^{1}$ such that $h\circ f=R_{\rho}\circ h$, where $\rho$ is the rotation number of $f$ and $R_{\rho}$ is the rigid rotation by $\rho$. \\
In particular, the order of points in an orbit of $f$ is the same as the order of points in an orbit of $R_{\rho}$. Therefore, results about $R_{\rho}$ can be translated into results about $f$, via the semiconjugacy $h$.\newline

We can build the so called dynamical partitions $\mathscr{P}_{n}$ of $\mathbb S^1$ to study the geometric properties of $f$, see \cite{fun}.\newline
$\mathscr{P}_{n}$ is generated by the first $q_{n}+q_{n+1}$ preimages of $U$ and consists of 
\begin{displaymath}
\mathscr{I}_{n}:=\left\{ \underline{-i}: 0\leq i\leq q_{n+1}+q_{n}-1\right\},
\end{displaymath}
together with the gaps between these intervals.\newline
There are two kinds of gaps:
\begin{itemize}
\item The `long' gaps are the interval $I^{n}_{0}$, which is the interval between $\underline{-q_{n}}$ and $\underline 0$ for $n$ even or the interval between $\underline{0}$ and $\underline{-q_n}$ for $n$ odd, with its preimages,
\begin{displaymath}
I_{i}^{n}:=f^{-i}(I^{n}_{0}), i=0,1,\ldots q_{n+1}-1.
\end{displaymath}
\end{itemize}
\begin{itemize}
\item The `short' gaps are the interval $I^{n+1}_{0}$, which is the interval between $\underline 0$ and $\underline{-q_{n+1}}$ for $n$ even or the interval between $\underline{ -q_{n+1}}$ and $\underline{0}$ for $n$ odd, with its preimages,
\begin{displaymath}
I_{i}^{n+1}:=f^{-i}(I^{n+1}_{0}), i=0,1,\ldots, q_{n}-1.
\end{displaymath}
\end{itemize}
We will briefly explain the structure of the partitions. \\
Take two consecutive dynamical partitions of order $n$ and $n+1$. The latter is clearly a refinement of the former. All `short' gaps of $\mathscr P_{n}$ become `long' gaps of $\mathscr P_{n+1}$ while all `long' gaps of $\mathscr P_{n}$ split into $a_{n+2}$ preimages of $U$ and $a_{n+2}$ `long' gaps and one `short' gap of the next partition $\mathscr P_{n+1}$:
\begin{equation}\label{div}
I_{i}^{n}=\bigcup_{j=1}^{a_{n+2}}f^{-i-q_n-jq_{n+1}}(U)\cup\bigcup_{j=0}^{a_{n+2}-1}I_{i+q_n+jq_{n+1}}^{n+1}\cup I_{i}^{n+2}.
\end{equation}

Several of the proofs in the following will depend strongly on the relative positions of the points and intervals of $\mathscr{P}_{n}$. \\
In reading the proofs the reader is advised to keep the following pictures in mind, which show some of these objects near the flat interval $\underline{0}$.\newline
\newline
\xymatrix{
&{\underline{-q_{n-2}}} \ar[d] &  &{\underline{-q_{n}+(a_{n}-1)q_{n-1}}} \ar[d]  
 \\
\ar@{-}[rrrrrr]&{=}
&{\bullet}&{=}&{-}\ar@{-}\ar@{-}^{...}&{\bullet}&\\
& & {\underline{a_{n}q_{n-1}}}\ar[u] & & &  {\underline{2q_{n-1}}\ar[u] }}
\vspace{1cm}
\begin{center}
\xymatrix{
&{\underline{-q_n+q_{n-1}}} \ar[d] & &{\underline{-q_{n}}} \ar[d]&&&&{\underline{-q_{n-1}}}\ar[d]
 \\
\ar@{-}[rrrrrrr]&{=}&{\bullet}&{=}&\ar@{=}[r]^{\underline{0}}& &{\bullet}&{=}&\ar@{-}[llll]\\
  & &{\underline{q_{n-1}}}\ar[u] & & & & {\underline{q_{n}}}\ar[u]&  }
\end{center}

\vspace{1cm}

In the next picture we have enlarged the right-hand part of this picture to show the location of the points $\underline{q_{n}}$, $\underline{2q_{n}}$ and $\underline{3q_{n}}$ for the case $n$ even and $a_n=1$.
\begin{center}
\xymatrix{
& & &&{\underline{-q_{n-1}}} \ar[d]&
 \\
\ar@{-}[rrrrrrr]&\ar@{=}[r]^{\underline{0}}&&{\bullet}&{=}&{\bullet}&{\bullet}&\\
  && &{\underline{q_{n}}}\ar[u] &  &{\underline{2q_{n}}}\ar[u]& {\underline{3q_{n}}}\ar[u]&  }
\end{center}
Now we present a few results from \cite{a} which are used throughout the paper:
\begin{prop}\label{figo}
Let $A\in\mathscr I_{n}$ be a preimage of $U$ and $B$ any of two gaps adjacent to $A$. Then there exists a constant $C$ such that, for every $n\in\mathbb N$, $\frac{\left|A\right|}{\left|B\right|}\geq C$.
\end{prop}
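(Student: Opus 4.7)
The plan is to pull back the ratio $|A|/|B|$ via a suitable iterate of $f$ so that the question reduces to the base case, where $A$ has been brought to the flat interval $\underline{0}=U$; the resulting base-case ratio $|U|/|B'|$ is then bounded below trivially, and the pull-back is controlled via the Koebe principle of Proposition \ref{Koebe liv}.

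I would write $A=\underline{-i}$ for some $0\le i\le q_{n+1}+q_n-1$. The finite orbit $\underline{-i},\underline{-i+1},\ldots,\underline{-1}$ avoids the flat interval, so $f^i$ is a diffeomorphism on a neighborhood $T$ of $A$. Under $f^i$, $A$ is sent to $U$ and the adjacent gap $B$ is sent to one of the two gaps of $\mathscr P_n$ adjacent to $U$, namely $B'\in\{I^n_0,\,I^{n+1}_0\}$. Setting $J:=A\cup B$ and choosing $T$ to be a moderate enlargement of $J$ obtained by adding one or two adjacent atoms of $\mathscr P_n$ on either side, I would verify the three hypotheses of Proposition \ref{Koebe liv} on the pair $J\subset T$: the diffeomorphism property for $f^i$ on $T$ follows from the construction; the summability $\sum_{k=0}^{i-1}|f^k(J)|\le 2|\mathbb S^1|$ follows because the iterates $f^k(A)=\underline{-(i-k)}$ are pairwise distinct atoms of $\mathscr I_n$ and the iterates of $B$ are pairwise disjoint gaps of the dynamical partition; and the Koebe space condition is arranged by the choice of $T$.

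Applying Proposition \ref{Koebe liv} to the two subintervals $A,B\subset J$, but with the roles of $A$ and $B$ interchanged so as to obtain the inequality in the direction needed, would then yield a constant $C_1>0$ with
\begin{displaymath}
\frac{|A|}{|B|}\ge C_1\,\frac{|U|}{|B'|}\ge \frac{C_1\,|U|}{|\mathbb S^1|},
\end{displaymath}
the last inequality being trivial since $|B'|\le|\mathbb S^1|$, and $|U|$ is a fixed positive constant. This provides the desired uniform lower bound with $C=C_1|U|/|\mathbb S^1|$.

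The hard part will be the uniform verification of the Koebe space condition: the enlargement $T$ must be chosen so that $f^i(T)\setminus f^i(J)$ supplies a definite amount of space on both sides, independently of $n$ and of the choice of $A\in\mathscr I_n$. This is where the refined control of the geometry of the dynamical partitions developed in \cite{a} is essential; absent this, the extension $T$ could collapse relative to $J$ under $f^i$ and the distortion bound would degenerate.
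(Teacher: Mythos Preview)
The paper does not give its own argument for this proposition; it simply refers the reader to \cite{a}. Your proposal, by the final paragraph, also ends up deferring the decisive step to \cite{a}. In that sense the two agree. But the concrete plan you sketch before deferring does not work, and the appeal to \cite{a} at that point is circular.

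The failure is exactly where you suspect, the Koebe space, but it is not a mere technicality to be patched by ``refined control''. You push $A=\underline{-i}$ forward by $f^{i}$ onto $U$, so $f^{i}(J)\supset U$ and hence $|f^{i}(J)|\ge |U|$, a \emph{fixed} positive number. The third hypothesis of Proposition~\ref{Koebe liv} then demands space of order $|U|/K$ on each side of $f^{i}(J)$. Any admissible $T$ (one on which $f^{i}$ is a diffeomorphism) is hemmed in by the nearest $\underline{-k}$ with $k<i$; after applying $f^{i}$, the boundary of $f^{i}(T)$ consists of forward iterates $\underline{m}$ with $1\le m\le i$, and the one closest to $U$ is the last closest return $\underline{q_{m}}$ with $q_{m}\le i$. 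For $i$ in the range $q_{n}\le i<q_{n+1}+q_{n}$ this gives a wall at $\underline{q_{n}}$, and $|(\underline{0},\underline{q_{n}})|\to 0$. Thus no enlargement $T$ can provide uniform Koebe space on that side, and the constant $C_{1}$ in your displayed inequality degenerates with $n$. (Incidentally, in this same range the image $f^{i}(B)$ is $(\underline{0},\underline{q_{n}})$, not a gap of $\mathscr P_{n}$, so the sentence ``$B$ is sent to one of the two gaps of $\mathscr P_{n}$ adjacent to $U$'' is already false; but that is a minor point compared with the space issue.)

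Invoking the ``refined control'' of \cite{a} to rescue the space estimate is circular: Proposition~\ref{figo} \emph{is} one of the basic real bounds proved there, and it is an input to, not a consequence of, the Koebe-type statements in this paper. The argument in \cite{a} does not pull back to the fixed-size interval $U$. It proceeds by cross-ratio distortion (the mechanism behind Theorem~\ref{point} in the Appendix) and an induction over partition levels, so that the four points in play always live at the current dynamical scale; this is precisely what avoids the mismatch between the fixed size of $U$ and the shrinking neighbouring gaps.
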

\begin{cor}\label{gap}
The lengths of the gaps of the dynamical partition $\mathscr P_{n}$ tend to zero at least exponentially fast with $n$.
\end{cor}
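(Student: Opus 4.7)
The plan is to prove the two-step contraction $M_{n+2} \le \kappa\, M_n$ with $\kappa = \tfrac{2}{C+2} < 1$, where $M_n$ denotes the length of the longest gap of $\mathscr{P}_n$ and $C$ is the constant from Proposition \ref{figo}; iterating this inequality yields the exponential decay of $M_n$ asserted by the corollary. Two steps are needed because a single refinement leaves short gaps unchanged: the short gap $I_i^{n+1}$ of $\mathscr{P}_n$ reappears unrefined as a long gap of $\mathscr{P}_{n+1}$ and is only truly subdivided when one passes to $\mathscr{P}_{n+2}$.

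The one-step input is a direct consequence of Proposition \ref{figo}. Consider any long gap $g = I_i^n$. By the refinement formula $(\ref{div})$, in $\mathscr{P}_{n+1}$ it is cut into an alternating string of $a_{n+2}$ new preimages of $U$ (belonging to $\mathscr{I}_{n+1}$) and $a_{n+2}+1$ new gaps (the long gaps $I_{i+q_n+jq_{n+1}}^{n+1}$ and the short gap $I_i^{n+2}$). Let $P$ and $G$ denote the total lengths of the new preimages and the new gaps respectively. Each new gap $g'$ is adjacent to at least one of the $a_{n+2}$ new preimages $A$, so Proposition \ref{figo} gives $|g'| \le |A|/C$. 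Charging each new gap to one of its adjacent new preimages, and noting that each such preimage is adjacent to at most two new gaps, one gets $G \le 2P/C$. Combined with $P + G = |g|$ this yields $G \le \tfrac{2}{C+2}|g|$, and in particular every individual new gap sitting inside $g$ has length at most $\tfrac{2}{C+2}|g|$.

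To deduce the two-step contraction, fix any gap $h$ of $\mathscr{P}_{n+2}$. Because preimages of $U$ are atoms of every partition and never subdivide, $h$ lies inside some gap $g$ of $\mathscr{P}_n$. If $g = I_i^n$ is a long gap, the one-step bound applied at level $n$ places $h$ inside a gap of $\mathscr{P}_{n+1}$ of length at most $\tfrac{2}{C+2}|g|$. If instead $g = I_i^{n+1}$ is a short gap, it is by definition a long gap of $\mathscr{P}_{n+1}$, and the one-step bound applied at level $n+1$ forces every gap of $\mathscr{P}_{n+2}$ contained in $g$ to have length at most $\tfrac{2}{C+2}|g|$. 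In both cases $|h| \le \tfrac{2}{C+2} M_n$, so $M_{n+2} \le \tfrac{2}{C+2} M_n$, which is what we wanted. I expect the only real subtlety to be the bookkeeping in the second case, which is handled by following the short gap one partition level forward until it is finally refined; once this is set up, the whole argument is essentially combinatorial on top of Proposition \ref{figo}.
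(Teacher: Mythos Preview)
Your argument is correct. The paper itself does not prove Corollary~\ref{gap} but defers to \cite{a}; the proof there proceeds along the same lines, deducing exponential decay of gap lengths directly from Proposition~\ref{figo} by observing that the new preimages appearing at each refinement occupy a definite proportion of every long gap, which is exactly the mechanism you exploit via the inequality $G\le \tfrac{2}{C+2}|g|$ and the two-step bookkeeping for short gaps.
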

\begin{lemma}\label{cielo}
The ratio 
\begin{displaymath}
\frac{\left|\underline{-q_n+iq_{n-1}}\right|}{\left|\left[\underline{-q_n+iq_{n-1}},\underline{0}\right)\right|}
\end{displaymath}
is bounded away from zero by a uniform constant for all $i=0,\dots,a_n$.
\end{lemma}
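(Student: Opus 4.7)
The plan is to pull the ratio back to the opposite side of the flat interval $\underline{0}$ via the iterate $f^{q_n-iq_{n-1}}$, where a lower bound is immediate. I first verify that the only preimages of $U$ strictly inside $(\underline{-q_n+iq_{n-1}},\underline{0})$ are the $\underline{-q_n+jq_{n-1}}$ with $j<i$, each of which has index $q_n-jq_{n-1}>q_n-iq_{n-1}$. Consequently $f^{q_n-iq_{n-1}}$ is a $C^{2}$-diffeomorphism on $J:=[\underline{-q_n+iq_{n-1}},\underline{0})$, and on a slightly larger interval $T$ obtained by enlarging $J$ across the long gap of $\mathscr P_{n-1}$ adjacent to $\underline{-q_n+iq_{n-1}}$ on the side of $\underline{-q_n+(i+1)q_{n-1}}$ (or, when $i=a_n$, by including part of the cell of $\mathscr P_{n-3}$ just beyond $\underline{-q_{n-2}}$). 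Under this iterate $\underline{-q_n+iq_{n-1}}$ is sent onto $\underline{0}$ while the gap $(\underline{-q_n+iq_{n-1}},\underline{0})$ is carried onto the gap $(\underline{0},\underline{q_n-iq_{n-1}})$ on the opposite side of $\underline{0}$.

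I then apply Proposition \ref{Koebe liv} with $A:=\underline{-q_n+iq_{n-1}}$ and $B:=J$. The summability hypothesis $\sum_{k=0}^{q_n-iq_{n-1}-1}|f^{k}(J)|<\infty$ follows from Corollary \ref{gap}, because each intermediate image $f^{k}(J)$ sits inside a single cell of a high-level dynamical partition and those cells decay exponentially with the level. The Koebe-space condition is secured on the far side by the enlargement $T\supsetneq J$ described above, and on the $\underline{0}$ side by the flat interval itself: since $f^{q_n-iq_{n-1}}(J)$ contains $\underline{0}$ at its boundary and $|\underline{0}|=|U|$ is a fixed positive constant, this provides the required definite Koebe-space in the image. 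Proposition \ref{Koebe liv} then yields
\[
\frac{|\underline{-q_n+iq_{n-1}}|}{|[\underline{-q_n+iq_{n-1}},\underline{0})|}\;\geq\;C\,\frac{|\underline{0}|}{|[\underline{0},\underline{q_n-iq_{n-1}})|},
\]
uniformly in $n$ and $i$.

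The right-hand side is bounded below by a uniform positive constant, because the numerator is the fixed length $|U|$ while the denominator does not exceed the circumference of the circle; this completes the proof. The main technical difficulty lies in securing the Koebe-space condition at the $\underline{0}$ boundary: $f$ collapses $\underline{0}$ to a point and does not extend as a diffeomorphism across it, so one cannot simply enlarge $T$ into the other side of $\underline{0}$ in the source. The remedy is to use the flat interval as intrinsic Koebe-space in the image, which is the step that ties the resulting constant to the fixed length of $U$ and makes essential use of the bounded-type assumption, so that at most finitely many ($\leq\sup_n a_n+1$) preimages $\underline{-q_n+jq_{n-1}}$ have to be handled together.
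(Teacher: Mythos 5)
Your pullback to the opposite side of $\underline{0}$ cannot be pushed through Proposition \ref{Koebe liv}: its third hypothesis fails structurally in your setting, not just for lack of care. Write $k=q_n-iq_{n-1}$, $A=\underline{-q_n+iq_{n-1}}$, $B=J=[\underline{-q_n+iq_{n-1}},\underline{0})$. Then $f^{k}(J)=[\underline{0},\underline{q_n-iq_{n-1}})$ \emph{contains} the flat interval $U=f^{k}(A)$, so $\left|f^{k}(J)\right|\geq\left|U\right|$ is a fixed macroscopic constant, and the hypothesis demands space of at least $\left|f^{k}(J)\right|/K$ between $f^{k}(J)$ and $\partial f^{k}(T)$ on \emph{both} sides. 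On the side of the point $\underline{q_n-iq_{n-1}}$ there is no space at all: that point is the image of the boundary point of $U$, and $T$ cannot be enlarged past this boundary point in the source, because everything beyond it lies in $U$ and is crushed by $f^{k}$ onto the single point $\underline{q_n-iq_{n-1}}$; hence the right endpoint of $f^{k}(J)$ is forced to be an endpoint of $f^{k}(T)$, and the distance is $0$. On the other side the most you may add to $T$ is the adjacent long gap of $\mathscr P_{n-1}$ (going further would swallow $\underline{-q_n+(i+1)q_{n-1}}$, which $f^{k}$ collapses to the point $\underline{q_{n-1}}$, ruining the diffeomorphism hypothesis), and for $0\le i<a_n$ its $f^{k}$-image is only the interval between $\underline{q_{n-1}}$ and $\underline{0}$, which lies in the gap $(\underline{-q_{n-2}},\underline{0})$ of $\mathscr P_{n-2}$ and so has length tending to $0$ by Corollary \ref{gap}; your variant for $i=a_n$ fares no better. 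So the required space, which must be comparable to $\left|U\right|$, is unavailable on either side. Your key sentence -- that the flat interval itself supplies the Koebe space -- is exactly backwards: $U$ sits \emph{inside} $f^{k}(J)$, being $f^{k}(A)$, so it is not space between $f^{k}(J)$ and $\partial f^{k}(T)$; on the contrary, its fixed positive length is precisely what makes the space condition unsatisfiable. (Two smaller points: to get a lower bound on $\left|A\right|/\left|B\right|$ you must apply the Proposition with $A$ and $B$ interchanged; and the intermediate images $f^{j}(J)$ contain the preimages $\underline{-(k-j)}$, so they do not sit inside single gaps of a partition as you claim, although the summability itself could be salvaged.)

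Note also that this paper does not prove Lemma \ref{cielo} at all; it quotes it from \cite{a}. The argument there, like the related estimates in Lemma \ref{help} and in the Appendix here, exploits the definite size of $U$ through cross-ratio (\textbf{Poin}) inequalities together with the explicit power-law form $x\mapsto x^{l}$ at the boundary of the flat interval, treating the passage through the critical point by hand; this is forced on one, because whenever the image of the reference interval contains $U$, a macroscopic Koebe-space condition of the kind required by Proposition \ref{Koebe liv} can never hold. Any repair of your approach must likewise split off the final application of $f$ near $\partial U$ and control it with the local model $x^{l}$, applying the distortion tool only to the remaining diffeomorphic iterates.
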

For the proof of Proposition \ref{figo}, Corollary \ref{gap} and Lemma \ref{cielo} see \cite{a}.
\begin{lemma}\label{brava!}
For every $i\in\{0,\dots,a_n-1\}$ the ratio
\begin{displaymath}
\omega_i=\frac{\left|\left(\underline{-q_n+(i+1)q_{n-1}},\underline{-q_n+iq_{n-1}}\right)\right|}{\left|\underline{-q_n+iq_{n-1}}\right|}
\end{displaymath}
 is uniformly comparable to $\alpha_{n-1}$.
\end{lemma}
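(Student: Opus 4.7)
I propose to recognize the configuration $\underline{-q_n+iq_{n-1}}\cup\bigl(\underline{-q_n+(i+1)q_{n-1}},\underline{-q_n+iq_{n-1}}\bigr)$ as a forward iterate of the canonical configuration $\underline{-q_{n-1}}\cup I_{0}^{n-1}$ defining $\alpha_{n-1}$, and then to transfer the desired ratio by a Koebe distortion argument (Proposition \ref{Koebe liv}).

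Set $m:=q_{n-2}+(a_n-1-i)\,q_{n-1}$. Using the recurrence $q_n=a_nq_{n-1}+q_{n-2}$, a direct subscript computation yields
\begin{align*}
f^{m}\bigl(\underline{-q_n+iq_{n-1}}\bigr) &= \underline{-q_{n-1}},\\
f^{m}\bigl(\underline{-q_n+(i+1)q_{n-1}}\bigr) &= \underline{0},
\end{align*}
whence the gap between the two preimages is exactly the long gap $I_{m}^{n-1}$ of $\mathscr P_{n-1}$, and it is mapped by $f^{m}$ onto $I_{0}^{n-1}$.

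Let $J:=\underline{-q_n+(i+1)q_{n-1}}\cup I_{m}^{n-1}\cup\underline{-q_n+iq_{n-1}}$. For each $0\le k\le m-1$, $f^{k}(J)$ consists of two preimages of $U$ together with the long gap $I_{m-k}^{n-1}$ of $\mathscr P_{n-1}$, and is therefore disjoint from the interior of $\underline 0$; hence $f^{m}$ is a diffeomorphism on a neighbourhood $T$ of $J$. The sets $f^{k}(J)$, $k=0,\ldots,m-1$, are pairwise disjoint subintervals of the circle, so $\sum_{k=0}^{m-1}|f^{k}(J)|\le 1$. Choosing $T$ to extend $J$ by the two elements of $\mathscr P_{n-1}$ adjacent to it on either side provides, via Proposition \ref{figo}, a two-sided Koebe buffer for $f^{m}(J)$ inside $f^{m}(T)$ whose size is uniformly comparable to $|f^{m}(J)|$. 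Proposition \ref{Koebe liv}, applied both with $(A,B)$ and with the roles of $A$ and $B$ swapped, then delivers the two-sided bound
\begin{equation*}
\omega_i \;\asymp\; \frac{|I_{0}^{n-1}|}{|\underline{-q_{n-1}}|}.
\end{equation*}

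Applying Proposition \ref{figo} once more to $\underline{-q_{n-1}}\in\mathscr I_{n-1}$ and its adjacent gap $I_{0}^{n-1}$ shows that $r_{n}:=|I_{0}^{n-1}|/|\underline{-q_{n-1}}|$ is uniformly bounded from above, so $\alpha_{n-1}=r_{n}/(1+r_{n})\asymp r_{n}\asymp\omega_i$. The main obstacle is the Koebe step, specifically securing a uniform Koebe buffer on the side of $f^{m}(J)$ adjacent to the flat interval $\underline 0$: extending $T$ past $\underline{-q_n+(i+1)q_{n-1}}$ maps under $f^{m}$ to an extension past $\underline 0$ into the short gap $I_{0}^{n}$, which in the degenerate regime studied here may be much smaller than $|\underline 0|$. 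One therefore has to verify that the next adjacent element of $\mathscr P_{n-1}$ already supplies a buffer comparable to $|f^{m}(J)|$, which rests on the size comparisons between preimages of $U$ and adjacent gaps given by Proposition \ref{figo} together with the exponential decay of gaps from Corollary \ref{gap}.
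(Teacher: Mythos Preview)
Your overall strategy --- pushing the configuration forward by $f^{m}$ with $m=q_{n}-(i+1)q_{n-1}$ onto the pair $\underline{0},\underline{-q_{n-1}}$ and then invoking Koebe --- is exactly the paper's. The defect is in the Koebe set-up, and it is more serious than the obstacle you flag.

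Your $J$ contains the preimage $\underline{-q_{n}+(i+1)q_{n-1}}$, whose image under $f^{m}$ is the flat interval $\underline{0}$ itself. Hence $|f^{m}(J)|\ge|\underline{0}|$, a fixed positive number. Any buffer obtained by adjoining an adjacent \emph{gap} of $\mathscr P_{n-1}$ then maps into a set of length at most $|I_{0}^{n}|$ on one side and $|(\underline{-q_{n-1}},\underline{-2q_{n-1}})|$ on the other --- both tending to zero with $n$. Consequently the hypothesis $|f^{m}(J)|\le K\,\mathrm{dist}\bigl(f^{m}(J),\partial f^{m}(T)\bigr)$ of Proposition~\ref{Koebe liv} fails on \emph{both} sides, and neither Proposition~\ref{figo} nor Corollary~\ref{gap} can repair it. (Incidentally, the $f^{k}(J)$ are not pairwise disjoint --- consecutive ones share a preimage of $U$ --- though the sum bound survives with constant $2$ or $3$.)

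The paper's remedy is to reverse the roles of gap and preimage in the collar. For $i\ge 1$ it takes $J=(\underline{-q_n+(i+1)q_{n-1}},\underline{-q_n+(i-1)q_{n-1}})$ and $T=[\underline{-q_n+(i+1)q_{n-1}},\underline{-q_n+(i-1)q_{n-1}}]$, so the two \emph{preimages of $U$} themselves serve as the Koebe collar. Then $f^{m}(J)=(\underline{0},\underline{-2q_{n-1}})$ is small, while the buffers are $|\underline{0}|$ (large) and $|\underline{-2q_{n-1}}|$, the latter comparable to $|f^{m}(J)|$ by Lemma~\ref{cielo}. The case $i=0$ must be treated separately: there is no interval $\underline{-q_{n}+(i-1)q_{n-1}}$ sitting on the correct side of $\underline{-q_n}$ (the neighbour there is $I_0^n\cup\underline{0}$, which blocks the diffeomorphism), so the paper first applies $f$ once, absorbing the critical power $l$, and only then runs Koebe on the shifted configuration $[\underline{-q_n+q_{n-1}+1},\underline{-q_n+1}]$. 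Your uniform treatment misses this necessary case distinction.
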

\begin{proof}
For every $i\in\{ 1,\dots,a_n-1\}$, we apply Proposition \ref{Koebe liv} to
\begin{itemize}
\item[-] $T=\left[\underline{-q_{n}+(i+1)q_{n-1}},\underline{-q_{n}+(i-1)q_{n-1}}\right]$,
\item[-] $J=\left(\underline{-q_{n}+(i+1)q_{n-1}},\underline{-q_{n}+(i-1)q_{n-1}}\right)$
\item[-] $f^{q_{n}-(i+1)q_{n-1}}$.
\end{itemize}

Observe that the hypotheses  are satisfied:
\begin{itemize}
\item[-] $T$ doesn't contain any preimage of the flat interval of the type $f^{-j}$ with $j< q_{n}-(i+1)q_{n-1} $, so $f^{q_{n}-(i+1)q_{n-1}}$ is a diffeomorphism on $T$; 
\item[-] the set $\bigcup_{j=0}^{q_{n}-(i+1)q_{n-1}-1}f^j\left(J\right)$ covers each point of $\mathbb S^1$ at most twice;
\item[-] if $n$ is large enough, by Lemma \ref{cielo} there exists a constant $K$ such that $\left|f^{q_{n}-(i+1)q_{n-1}}(J)\right|=\left|\left(\underline{0},\underline{-2q_{n-1}}\right)\right|\leq K\left |\underline{-2q_{n-1}}\right|$.
\end{itemize}

So, by Proposition \ref{Koebe liv} and Proposition \ref{figo}, $\omega_i$ is comparable to $\alpha_{n-1}$. \\
% we can find two constants $C$ and $C_1$ such that
%\begin{displaymath}
%\frac{\left|\left(\underline{-q_{n}+(i+1)q_{n-1}},\underline{-q_{n}+iq_{n-1}}\right)\right|}{\left|\underline{-q_{n}+iq_{n-1}}\right|}\leq C\frac{\left|%\left(\underline{0},\underline{-q_{n-1}}\right)\right|}{\left|\underline{-q_{n-1}}\right|}\leq CC_1\alpha^{n-1}.
%\end{displaymath} 
%Moreover
%\begin{displaymath}
%\frac{\left|\underline{-q_{n}+iq_{n-1}}\right|}{\left|\left(\underline{-q_{n}+(i+1)q_{n-1}},\underline{-q_{n}+iq_{n-1}}\right)\right|}\leq C\frac{\left|%\underline{-q_{n-1}}\right|}{\left|\left(\underline{0},\underline{-q_{n-1}}\right)\right|}\leq \frac{C}{\alpha^{n-1}}.
%\end{displaymath}

For $i=0$ we apply Proposition \ref{Koebe liv} to
\begin{itemize}
\item[-] $T=\left[\underline{-q_{n}+q_{n-1}+1},\underline{-q_{n}+1}\right]$,
\item[-] $J=\left(\underline{-q_{n}+q_{n-1}+1},\underline{-q_{n}+1}\right)$
\item[-] $f^{q_{n}-q_{n-1}-1}$.
\end{itemize}
As before, the hypotheses are satisfied (the intervals $f^{j}\left(J\right)$ for \\
$j\in\left\{1,\dots,q_{n}-q_{n-1}-2\right\}$ are disjoint).\\
So, for $n$ large
\begin{displaymath}
\omega_0^l=\frac{\left|\left(\underline{-q_n+q_{n-1}+1},\underline{-q_n+1}\right)\right|}{\left|\underline{-q_n+1}\right|}
\end{displaymath}
is uniformly comparable to $\alpha_{n-1}$.\\
This concludes the proof.
\end{proof}

\section{Proof of Theorem $\ref{1}$}

In the proof of Theorem 2 in \cite{a} it is shown that, if the critical exponent of the map $f$ is less than or equal to $2$, then there exists $0<\lambda<1$ such that, for every $n$ large enough
\begin{equation}\label{alpha}
\alpha_n=\frac{|(\underline{-q_n},\underline{0})|}{|[\underline{-q_n},\underline{0})|}\leq \lambda^n.
\end{equation}
\begin{lemma}\label{sole}
There is a positive constant $C$ such that  each gap $A^{n-2}$ of $\mathscr P_{n-2}$ is split into at least two gaps $A_i^{n}$ of $\mathscr P_{n}$ whose lengths satisfy $|A_i^{n}|<C\lambda^{n-1}|A^{n-2}|$.
\end{lemma}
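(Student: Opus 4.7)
The plan is to split into the two possible types of $A^{n-2}$, reduce each to a ``model'' gap sitting next to the critical point by a Koebe distortion argument, and in the model case combine Lemma~\ref{brava!} and the bound (\ref{alpha}) on $\alpha_{n-1}$.

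First I would dispose of the multiplicity statement: by (\ref{div}), a long gap $I_i^{n-2}$ of $\mathscr{P}_{n-2}$ refines after two steps into $a_n(a_{n+1}+1)+1\ge 3$ gaps of $\mathscr{P}_n$, while a short gap $I_i^{n-1}$ of $\mathscr{P}_{n-2}$ is already a long gap of $\mathscr{P}_{n-1}$ and splits, again by (\ref{div}), into $a_{n+1}+1\ge 2$ gaps of $\mathscr{P}_n$. Then I would treat the size bound in the two model cases $A^{n-2}=I_0^{n-2}$ and $A^{n-2}=I_0^{n-1}$. For $I_0^{n-2}$, the refinement (\ref{div}) produces as $\mathscr{P}_{n-1}$-gaps inside it the $a_n$ long gaps $I_{q_{n-2}+jq_{n-1}}^{n-1}$ together with the short gap $I_0^n$. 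The short gap satisfies $|I_0^n|\le\alpha_n|[\underline{-q_n},\underline 0)|\le\lambda^n|I_0^{n-2}|$, since $[\underline{-q_n},\underline 0)\subset\overline{I_0^{n-2}}$. Each long gap can be bounded by Lemma~\ref{brava!} after identifying $\underline{-q_n+iq_{n-1}}$ with $\underline{-(q_{n-2}+(j+1)q_{n-1})}$ via $i=a_n-1-j$:
\begin{displaymath}
|I_{q_{n-2}+jq_{n-1}}^{n-1}|\ \le\ C_1\,\alpha_{n-1}\,|\underline{-(q_{n-2}+(j+1)q_{n-1})}|\ \le\ C_1\lambda^{n-1}\,|I_0^{n-2}|,
\end{displaymath}
where in the last inequality I use that each preimage appearing is contained in $I_0^{n-2}$. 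Every gap of $\mathscr{P}_n$ inside $I_0^{n-2}$ is contained in one of these, so the estimate $|A^n_k|\le C\lambda^{n-1}|I_0^{n-2}|$ follows. The case $A^{n-2}=I_0^{n-1}$ is entirely analogous, with indices shifted up by one: the short gap of $\mathscr{P}_n$ inside is $I_0^{n+1}$, bounded by $\alpha_{n+1}|I_0^{n-1}|\le\lambda^{n+1}|I_0^{n-1}|$, while each long gap $I_{q_{n-1}+jq_n}^n$ is controlled by Lemma~\ref{brava!} applied one level up, giving a factor $\alpha_n\le\lambda^n$.

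Finally I would pass from the model cases to a general $A^{n-2}=I_i^{n-2}$ or $I_i^{n-1}$ by applying Proposition~\ref{Koebe liv} to the diffeomorphism $f^i$ (with $i\le q_{n-1}-1$) on a bounded enlargement $T$ of $J=A^{n-2}$ that includes the preimages of $U$ adjacent to $A^{n-2}$ in $\mathscr{P}_{n-2}$. Disjointness of the orbit of $J$ under the first $i$ iterates of $f$, which gives the bound on $\sum_{j=0}^{i-1}|f^j(J)|$, follows from the fact that $J$ is a gap of $\mathscr{P}_{n-2}$; the Koebe space condition $|f^i(J)|\le K\,\mathrm{dist}(f^i(J),\partial f^i(T))$ follows from Proposition~\ref{figo}, which makes the lengths of the preimages adjacent to $I_0^{n-2}$ (respectively $I_0^{n-1}$) comparable to $|I_0^{n-2}|$ (respectively $|I_0^{n-1}|$). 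Taking $A=A^n_k$ and $B=A^{n-2}$ in Proposition~\ref{Koebe liv} transfers the model estimate to $|A^n_k|/|A^{n-2}|\le C\lambda^{n-1}$, finishing the proof.

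The main obstacle is the bounded-distortion step: producing enough Koebe space for $f^i$ despite the flat interval sitting on the boundary of the target $I_0^{n-2}$, and doing so uniformly in $i$ and $n$. Once that standard but delicate verification is in place, the rest is just bookkeeping around the two-step refinement formula (\ref{div}) and applying Lemma~\ref{brava!} together with the decay (\ref{alpha}).
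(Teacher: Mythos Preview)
Your proposal is correct and follows essentially the paper's approach: reduce to Lemma~\ref{brava!} combined with the decay~(\ref{alpha}) of $\alpha_{n-1}$, treating short gaps of $\mathscr P_{n-2}$ as long gaps of $\mathscr P_{n-1}$. You are more explicit than the paper on two points it leaves implicit---the multiplicity count from~(\ref{div}) and the Koebe transfer (Proposition~\ref{Koebe liv}) from the model gap $I_0^{n-2}$ to a general $I_i^{n-2}$; the paper treats only the model gap adjacent to~$\underline 0$ and leaves the passage to arbitrary gaps unstated.
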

\begin{proof}
Because of the symmetry, it is enough to suppose that $n$ is even,that is $\underline{-q_n}$ is to the left of $\underline{0}$.\\

Let $A^{n-2}$ be a long gap of  $\mathscr P_{n-2}$;
\begin{displaymath}
 A^{n-2}=\left(\underline{-q_{n-2}},\underline{0}\right).
\end{displaymath}
As explained by the formula (\ref{div}), $A^{n-2}$ is subdivided into $a_n$ long gaps and one short gap of the dynamical partition $\mathscr P_{n-1}$. In our case these gaps are 
\begin{displaymath}
A_i^{n}=(\underline{-q_{n}+(i+1)q_{n-1}},\underline{-q_{n}+iq_{n-1}})
\end{displaymath}
for $i\in\{0,\dots,a_{n}-1\}$.\\

By Lemma \ref{brava!} and  the inequality (\ref{alpha}), there exists a constant $C$ such that, for every $i\in\{0,\dots, a_n-1\}$
\begin{displaymath}
\frac{\left|A_i^n\right|}{\left|A^{n-2}\right|}<\frac{\left|\left(\underline{-q_{n}+(i+1)q_{n-1}},\underline{-q_{n}+iq_{n-1}}\right)\right|}{\left|\underline{-q_{n}+iq_{n-1}}\right|}\leq C\alpha_{n-1}\leq C\lambda^{n-1}.
\end{displaymath} 

Since the shorts gaps of $\mathscr P_{n-2}$ become the longs gaps of the partition $\mathscr P_{n-1}$, we can use the estimates for long gaps of the partition $\mathscr P_{n-1}$ to conclude the proof.
\end{proof}
\paragraph{Proof of Theorem \ref{1}}

Observe that $\Omega_{n-2}=\bigcup_{A^{n-2}\in\mathscr P_{n-2}}A^{n-2}$ and  $\Omega_{n}=\bigcup_{A^{n}\in\mathscr P_{n}}A^{n}$ are two covers of $K$. We analyze the relations between them.\\
By Lemma \ref{sole} and the formula (\ref{div}), we can deduce that:
\begin{enumerate}
\item
 Each interval $A^{n-2}$ of $\Omega_{n-2}$ is split into at least two intervals $A_i^{n}$ of $\Omega_n$ whose lengths satisfy $|A_i^{n}|<C\lambda^{n-1}|A^{n-2}|$.
\item
 Each interval $A^{n-2}$ is split into at most $a_n\left(a_{n+1}+1\right)+1$ sub-intervals.
\end{enumerate}
Consequently,
\begin{displaymath}
\sum_{A^n\in\mathscr P_n}|A^{n}|^s\leq C^s(a_n\left(a_{n+1}+1\right)+1)\lambda^{(n-1)s}\sum_{A^{n-2}\in\mathscr P_{n-2}}|A^{n-2}|^s.
\end{displaymath}
Since the maximum diameter of the covering $\{A^n\}$ goes to zero as $n$ increases, the Hausdorff s-measure of $K$ must be zero for every $s>0$.
This implies the theorem.

\section{Proof of Theorem $\ref{2}$}
In this section we will assume that the critical exponent $\left(l,l\right)$ of $f$ is strictly greater than $2$.
\begin{lemma}\label{aiuto}
Two adjacent gaps of the dynamical partition $\mathscr P_{n}$ which are contained in the same gap of  $\mathscr P_{n-1}$ are comparable.
\end{lemma}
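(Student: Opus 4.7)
The plan is to combine the Koebe distortion principle (Proposition~\ref{Koebe liv}), Lemma~\ref{brava!} applied with indices shifted up by one, and the bounded-geometry condition of Theorem~\ref{sw}(2); the latter, together with Proposition~\ref{figo}, confines $\alpha_n$ to a fixed compact subinterval of $(0,1)$ for all large $n$. By Koebe applied to $f^i$ on $I_i^{n-1}$ (whose forward iterates $f^j(I_i^{n-1}) = I_{i-j}^{n-1}$ are disjoint gaps of $\mathscr{P}_{n-1}$, so the sum condition of Proposition~\ref{Koebe liv} holds), I reduce to the canonical case $H = I_0^{n-1}$. Formula~(\ref{div}) then describes $H$ as an interlacing of $a_{n+1}$ long gaps $G^{(k)} = (\underline{-q_{n+1}+(k+1)q_n},\underline{-q_{n+1}+kq_n})$, one short gap $S = I_0^{n+1}$, and preimages $P^{(k)} = \underline{-q_{n+1}+kq_n}$ of $U$. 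An adjacent pair in $\mathscr{P}_n$ inside $H$ is thus either two long gaps $(G^{(k)},G^{(k+1)})$ or the pair $(S,G^{(0)})$.

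Lemma~\ref{brava!}, applied at level $n+1$, gives $|G^{(k)}|/|P^{(k)}| \sim \alpha_n$, hence $|G^{(k)}| \sim |P^{(k)}|$ in view of the bounds on $\alpha_n$. The analogous computation for the short gap yields $|S|/|P^{(0)}| = \alpha_{n+1}/(1-\alpha_{n+1}) \sim 1$, so $|S| \sim |P^{(0)}| \sim |G^{(0)}|$; this disposes of the pair $(S,G^{(0)})$ at once.

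For two adjacent long gaps $G^{(k)},G^{(k+1)}$ with $0 \le k \le a_{n+1}-2$, I apply Proposition~\ref{Koebe liv} to $f^{m}$ with $m = q_{n+1}-(k+2)q_n$ on an open interval $T$ containing $G^{(k)} \cup P^{(k+1)} \cup G^{(k+1)}$ and stopping short of $P^{(k+3)}$ (or extending to the endpoint of $I_0^{n-1}$ when no such preimage exists). The iteration $f^m$ is a diffeomorphism on $T$ because $T$ is chosen to avoid every preimage of $U$ of index $< m$. Direct computation yields $f^m(G^{(k+1)}) = I_0^n$ and $f^m(G^{(k)}) = I_{q_n}^n$, the two long gaps of $\mathscr{P}_n$ flanking $\underline{-q_n}$. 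Proposition~\ref{figo} combined with Lemma~\ref{brava!} applied to $I_{q_n-q_{n-1}}^{n-1}$ shows that $|I_0^n|$ and $|I_{q_n}^n|$ are both comparable to $|\underline{-q_n}|$, hence to each other, so Koebe delivers $|G^{(k)}| \sim |G^{(k+1)}|$.

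The main obstacle is the careful verification of the Koebe hypotheses, particularly the Koebe-space and bounded-sum conditions for the iterates $\{f^j(T)\}_{j<m}$, together with the base comparison $|I_0^n| \sim |I_{q_n}^n|$, which itself hinges on the comparability of the adjacent preimages $|\underline{-q_n}|$ and $|\underline{-2q_n}|$ (reducing once more to Proposition~\ref{figo} and Lemma~\ref{brava!} in the bounded-$\alpha_n$ regime). The bounded-type assumption on the rotation number is essential: it bounds $a_{n+1}$ uniformly, so that chaining the $a_{n+1}-1$ pairwise comparisons $|G^{(k)}| \sim |G^{(k+1)}|$ across $k$ produces $n$-independent constants.
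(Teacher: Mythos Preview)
Your strategy is sound and uses the same core ingredients as the paper (Koebe, Lemma~\ref{brava!}, Proposition~\ref{figo}, bounded geometry from Theorem~\ref{sw}), but the route differs in two respects worth noting.

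First, the paper never pulls a general $I_i^{n-1}$ back to $I_0^{n-1}$; it simply works inside $I_0^{n-1}$ from the start, with the preimages $A_k=\underline{-q_{n+1}+kq_n}$ and gaps $B_k=(\underline{-q_{n+1}+kq_n},\underline{-q_{n+1}+(k+1)q_n})$. Your Koebe reduction in step~(1) is a legitimate strengthening (it makes the statement hold for every gap of $\mathscr P_{n-1}$), but it is not needed for any later application in the paper.

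Second, and more importantly, for two adjacent long gaps the paper does \emph{not} push forward to the pair $I_0^n,\,I_{q_n}^n$. Instead it telescopes directly: from Lemma~\ref{brava!} one has $|B_k|/|A_k|\ge C\alpha_n$, and Proposition~\ref{figo} gives $|A_k|/|B_{k-1}|\ge C$; chaining these (at most $a_{n+1}$ times, hence uniformly by bounded type) bounds $|B_{i-1}|/|B_i|$ from above and below. This sidesteps your base comparison $|I_0^n|\sim|I_{q_n}^n|$ entirely. That base comparison is exactly the comparison of $(\underline{-q_n},\underline{0})$ with $(\underline{-2q_n},\underline{-q_n})$, which in the paper is obtained only \emph{later}, in Proposition~\ref{flat} Case~2, as a consequence of Lemma~\ref{aiuto}. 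So your route is not circular, but your justification ``Lemma~\ref{brava!} applied to $I_{q_n-q_{n-1}}^{n-1}$'' is imprecise: Lemma~\ref{brava!} as stated concerns only the ratios $\omega_i$ inside $I_0^{n-2}$, and $I_{q_n}^n=(\underline{-2q_n},\underline{-q_n})$ lies in $I_{q_n-q_{n-1}}^{n-1}$, not there. You would have to rerun the Koebe argument of that lemma in the new location (or pull back once more), which is doable but is precisely the extra work the paper's telescoping avoids.

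In short: your argument can be completed, and your explicit treatment of the short/long pair $(S,G^{(0)})$ is cleaner than the paper's, but the adjacent-long-gap case is handled more economically by the paper's direct chain $|B_{i-1}|/|B_i|$ than by your push-forward-and-compare scheme.
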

\begin{proof}
Because of the symmetry, without loss of generality, we may assume that $n$ is even; so $\underline{-q_n}$ is to the left of the flat interval.\newline

For every $i\in\{0,\dots,a_{n+1}-1\}$ let
\begin{displaymath}
A_i=\underline{-q_{n+1}+iq_{n}}
\end{displaymath}
and let
\begin{displaymath}
B_i=(\underline{-q_{n+1}+iq_{n}}, \underline{-q_{n+1}+(i+1)q_{n}}).
\end{displaymath}

By Lemma \ref{brava!}, there exists a constant $C$ such that, for every $i\in\{0,\dots,a_{n+1}-1\}$
\begin{equation}\label{sonno}
\frac{\left|B_i\right|}{\left|A_i\right|}\geq C\alpha_n>C\tau_{n}
\end{equation}
 is bounded away from zero by Theorem \ref{sw}.\\
So, by the formula (\ref{sonno}) and Proposition \ref{figo}, there exists a constant $K>0$ such that
\begin{displaymath}
\frac{|B_{i-1}|}{|B_i|}\leq K.
\end{displaymath}

In order to compare $B_{i-1}$ and $A_i$, we observe that, for every $i\in\{1,\dots,a_{n+1}-1\}$ 
\begin{equation}\label{swi}
\frac{\left|A_i\right|}{\left|\underline{-q_{n+1}}\right|}\leq K_1\frac{\left|A_i\right|}{\left|\left(\underline{0},\underline{-q_{n+1}}\right)\right|}\leq K_1\frac{\left|\left(\underline{0}, A_i\right]\right|}{\left|\left(\underline{0},\underline{-q_{n+1}}\right)\right|}\leq K_1\frac{\left|\left(\underline{0}, \underline{-q_{n-1}}\right)\right|}{\left|\left(\underline{0},\underline{-q_{n+1}}\right)\right|}\leq K_2
\end{equation}
where the first inequality follows from the Proposition \ref{figo} and the last one from Theorem \ref{sw}.\\
Therefore, using the inequality (\ref{sonno}), Proposition \ref{figo} and the inequality (\ref{swi}), we have that
\begin{equation}\label{sic}
\frac{\left|B_{i-1}\right|}{\left|A_i\right|}\geq C\frac{\left|A_{i-1}\right|}{\left|A_i\right|}\geq C_1\frac{\left|B_{i-2}\right|}{\left|A_i\right|}\geq\dots\geq C_i\frac{\left|\underline{-q_{n+1}}\right|}{\left|A_i\right|}\geq\frac{C_i}{K_2}.
\end{equation}
Moreover, $|\left(\underline{0},\underline{-q_n}\right)|$ is bigger than a uniform constant multiple of $|\underline{-q_n}|$ as the sequence $\tau_n$ is bounded away from zero for $l>2$.\\
Finally, by the formula (\ref{sic}) and Proposition \ref{figo}, there exists a constant $K_1$ such that
\begin{displaymath}
\frac{|B_{i-1}|}{|B_i|}\geq K_1.
\end{displaymath}
So, $B_{i-1}$ and $B_{i}$ are comparable.

\end{proof}
\begin{lemma}\label{big}
There exists a constant $C$ such that
\begin{displaymath}
\left|\left(\underline{q_n},\underline{-q_{n+1}+q_n}\right)\right|\geq C\left|\left(\underline{-q_{n+1}},\underline{-q_{n+1}+q_n}\right)\right|
\end{displaymath}
\end{lemma}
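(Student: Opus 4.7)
The plan is to transport both intervals forward by $f^{q_{n+1}-q_n}$, control distortion by Proposition \ref{Koebe liv}, and reduce to a comparison between $|(\underline{q_{n+1}},\underline{0})|$ and $|(\underline{-q_n},\underline{0})|$ which will be settled directly by the scaling $\tau_{n+1}$ and Theorem \ref{sw}.

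Assume without loss of generality that $n$ is even; the other case is symmetric. The iterate $f^{q_{n+1}-q_n}$ sends $\underline{-q_{n+1}}\mapsto\underline{-q_n}$, $\underline{-q_{n+1}+q_n}\mapsto\underline{0}$ and $\underline{q_n}\mapsto\underline{q_{n+1}}$, so the two intervals in the statement are carried respectively onto $(\underline{-q_n},\underline{0})=I^n_0$ and $(\underline{q_{n+1}},\underline{0})$. The mirror of the second picture, applied with $n+1$ in place of $n$, shows that on the right of $\underline{0}$ the order is $\underline{0},\underline{-q_{n+1}},\underline{q_n},\underline{-q_{n+1}+q_n}$; in particular the point $\underline{q_n}$ lies in the interior of the long gap $J:=(\underline{-q_{n+1}},\underline{-q_{n+1}+q_n})=I^n_{q_{n+1}-q_n}$ of $\mathscr P_n$, so both intervals from the statement sit inside $J$.

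Next I would apply Proposition \ref{Koebe liv} with this $J$, the iterate $f^{q_{n+1}-q_n}$, and $T:=\underline{-q_{n+1}}\cup J\cup\underline{-q_{n+1}+q_n}$, obtained by adjoining to $J$ the two bounding preimage intervals. On each of the three pieces $f^{q_{n+1}-q_n}$ is a diffeomorphism (none of the intermediate iterates hits $U$), and the pieces glue $C^2$-smoothly across their common boundary points, which lie in $\mathbb S^1\setminus\overline U$; thus $f^{q_{n+1}-q_n}$ is a diffeomorphism on $T$ with image $\underline{-q_n}\cup I^n_0\cup\underline{0}$. The intermediate images $f^i(J)=I^n_{q_{n+1}-q_n-i}$ are distinct long gaps of $\mathscr P_n$ and hence pairwise disjoint, so $\sum_{i=0}^{q_{n+1}-q_n-1}|f^i(J)|\leq 1$; the Koebe-space condition reduces to $|I^n_0|\leq K\min(|\underline{-q_n}|,|U|)$, which follows from Proposition \ref{figo} on the $\underline{-q_n}$ side and from the fact that $|U|$ is a fixed positive constant while $|I^n_0|\to 0$ on the $\underline{0}$ side. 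Using the two intervals from the statement as $A$ and $B$ (in the order that yields a lower bound) then gives
\[
\frac{|(\underline{q_n},\underline{-q_{n+1}+q_n})|}{|(\underline{-q_{n+1}},\underline{-q_{n+1}+q_n})|}\;\geq\;C_1\,\frac{|(\underline{q_{n+1}},\underline{0})|}{|(\underline{-q_n},\underline{0})|}.
\]

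It remains to bound the right-hand ratio from below, and for this I would use that the same mirror argument placed one step earlier shows both that $\underline{q_{n+1}}$ lies in the interior of $(\underline{-q_n},\underline{0})$ and that $\underline{-q_n}$ sits between $\underline{0}$ and $\underline{q_{n-1}}$. The latter gives $|(\underline{0},\underline{q_{n-1}})|\geq|(\underline{0},\underline{-q_n})|$, whence by the definition of $\tau_{n+1}$,
\[
|(\underline{q_{n+1}},\underline{0})|\;=\;\tau_{n+1}\,|(\underline{0},\underline{q_{n-1}})|\;\geq\;\tau_{n+1}\,|(\underline{-q_n},\underline{0})|;
\]
the second part of Theorem \ref{sw} applies in the present bounded-type regime with $l>2$ and yields $\tau_{n+1}\geq c>0$, which closes the argument. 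The point requiring most care is the choice of $T$: one really must enlarge $J$ by the preimage $\underline{-q_{n+1}+q_n}$ (the map $f^{q_{n+1}-q_n}$ does not collapse this interval, only the next application of $f$ would) in order to place Koebe space on the side next to the flat interval $\underline{0}$, and then verify that the resulting space is comparable to $|U|$ rather than to a vanishing quantity.
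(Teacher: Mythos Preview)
Your argument is correct and follows essentially the same line as the paper's: both apply Proposition~\ref{Koebe liv} with $J=(\underline{-q_{n+1}},\underline{-q_{n+1}+q_n})$, $T=[\underline{-q_{n+1}},\underline{-q_{n+1}+q_n}]$ and the iterate $f^{q_{n+1}-q_n}$, reducing to the ratio $|(\underline{q_{n+1}},\underline{0})|/|(\underline{-q_n},\underline{0})|$. The only cosmetic difference is the final bound: you invoke $\tau_{n+1}$ directly via $|(\underline{0},\underline{q_{n-1}})|\geq|(\underline{-q_n},\underline{0})|$, whereas the paper instead replaces $\underline{q_{n+1}}$ by the closer $\underline{-q_{n+2}}$ and appeals to bounded geometry for $|(\underline{-q_{n+2}},\underline{0})|/|(\underline{-q_n},\underline{0})|$; your careful treatment of the Koebe space on the $\underline{0}$ side (using that $|U|$ is a fixed positive length) is actually more explicit than the paper's.
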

\begin{proof}
Apply Proposition \ref{Koebe liv} to
\begin{itemize}
\item[-]$T=\left[\underline{-q_{n+1}}, \underline{-q_{n+1}+q_n}\right]$
\item[-]$J=\left(\underline{-q_{n+1}}, \underline{-q_{n+1}+q_n}\right)$
\item[-]$f^{q_{n+1}-q_n}$.
\end{itemize}
Also in this case the hypotheses are satisfied:
\begin{itemize}
\item[-] $f^{q_{n+1}-q_n}$ is a diffeomorphism on $T$;
\item[-] the intervals $f^i\left(J\right)$ for $i\in\left\{1,\dots,q_{n+1}-q_n-1\right\}$ are disjoint;
\item[-] for $n$ large enough, by Proposition \ref{figo} there exists a constant $K$ such that 
\begin{displaymath}
\left|f^{q_{n+1}-q_n}\left(J\right)\right|=\left|\left(\underline{-q_n},\underline{0}\right)\right|\leq K \left|\underline{-q_{n}}\right|=K dist\left(f^{q_{n+1}-q_n}\left(J\right),\partial f^{q_{n+1}-q_n}\left(T\right)\right).
\end{displaymath}
\end{itemize}
So, there exists a constant $C$ such that
\begin{displaymath}
\frac{\left|\left(\underline{q_n},\underline{-q_{n+1}+q_n}\right)\right|}{\left|\left(\underline{-q_{n+1}},\underline{-q_{n+1}+q_n}\right)\right|}\geq C\frac{\left|\left(\underline{q_{n+1}},\underline{0}\right)\right|}{\left|\left(\underline{-q_{n}},\underline{0}\right)\right|} > C\frac{\left|\left(\underline{-q_{n+2}},\underline{0}\right)\right|}{\left|\left(\underline{-q_{n}},\underline{0}\right)\right|}
\end{displaymath}
which is bounded away from zero by Theorem \ref{sw}.\\
\end{proof}
\begin{lemma}\label{help}
Two gaps of the same dynamical partition which are adjacent to the flat interval are comparable.
\end{lemma}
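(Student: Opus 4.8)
}
The plan is to reduce the statement to a comparison between two preimages of $U$ and then to carry that comparison across the flat interval by following the forward orbit. By the reflection symmetry exchanging partitions of even and odd order I may assume $n$ is even, so the two gaps of $\mathscr{P}_n$ bordering $\underline 0$ are the long gap $L:=I_0^{n}=(\underline{-q_n},\underline 0)$ on the left and the short gap $R:=I_0^{n+1}=(\underline 0,\underline{-q_{n+1}})$ on the right. First I would note that each of these is comparable to the preimage of $U$ at its outer endpoint: Proposition \ref{figo} gives $|\underline{-q_n}|\ge C|L|$ and $|\underline{-q_{n+1}}|\ge C|R|$, while, since $\alpha_m>\tau_m$ and $\tau_m$ is bounded away from $0$ by the second part of Theorem \ref{sw}, one also has $|L|=\tfrac{\alpha_n}{1-\alpha_n}|\underline{-q_n}|\ge c\,|\underline{-q_n}|$ and likewise $|R|\ge c\,|\underline{-q_{n+1}}|$. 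So the lemma is equivalent to $|L|\asymp|R|$, i.e. to $|\underline{-q_n}|\asymp|\underline{-q_{n+1}}|$.

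To obtain this I would push $L$ and $R$ forward by $f^{q_n}$, which maps them respectively onto $(\underline 0,\underline{q_n})$ and onto $(\underline{q_n},\underline{-q_{n+1}+q_n})$ — two intervals meeting at the single point $\underline{q_n}$ and both lying inside the long gap $I_0^{n-1}$ of $\mathscr{P}_{n-1}$. On the right one has $|R|\asymp|\underline{-q_{n+1}}|\asymp|(\underline{-q_{n+1}},\underline{-q_{n+1}+q_n})|$ — the last step being the $i=0$ instance of Lemma \ref{brava!} at level $n+1$, where $\omega_0^{l}\asymp\alpha_n$ and $\alpha_n$ is bounded away from $0$ and $1$, so the singular exponent is harmless — and, since $f^{q_n}(R)\subset(\underline{-q_{n+1}},\underline{-q_{n+1}+q_n})$ by the combinatorial position of $\underline{q_n}$ while Lemma \ref{big} supplies the reverse bound $|f^{q_n}(R)|=|(\underline{q_n},\underline{-q_{n+1}+q_n})|\gtrsim|(\underline{-q_{n+1}},\underline{-q_{n+1}+q_n})|$, one gets $|f^{q_n}(R)|\asymp|R|$. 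The remaining work is to compare $f^{q_n}(L)$ with $f^{q_n}(R)$ and then to descend from $f^{q_n}(L)$ back to $L$; here the bounded-type hypothesis enters decisively: by Lemma \ref{aiuto}, Lemma \ref{brava!} and Proposition \ref{figo}, all of the at most $2a_{n+1}+1$ pieces of the $\mathscr{P}_n$-refinement of $I_0^{n-1}$ are mutually comparable with a constant depending only on $\sup_j a_j$ (in particular $|I_0^{n+1}|\asymp|I_0^{n-1}|$), and the distortion of the return map on the relevant pieces is controlled by Proposition \ref{Koebe liv} together with the Koebe-type estimate already used in the proof of Lemma \ref{big}. Assembling these gives $|L|\asymp|R|$ with a constant independent of $n$.

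The genuine obstacle is the flat interval itself. Since $\underline 0$ separates $L$ and $R$ and $f^{q_n}$ collapses $\underline 0$ to a point, there is no single interval straddling $\underline 0$ on which $f^{q_n}$ is a diffeomorphism, and in fact moving a gap adjacent to $\underline 0$ even one step past $\underline 0$ brings in $\overline U$, of fixed positive length, together with the singular behaviour of $f$ near $\partial U$; thus the comparison of $|L|$ and $|R|$ cannot be read off from one application of the Koebe principle and must be routed through the forward images $(\underline 0,\underline{q_n})$, $(\underline{q_n},\underline{-q_{n+1}+q_n})$ and through Lemma \ref{big}. Lemma \ref{big}, and therefore this lemma, rests on $\tau_n$ being bounded away from $0$ — precisely the second part of Theorem \ref{sw}, which is where the hypothesis $l>2$ is used — and this is also what keeps all the estimates above uniform in $n$.
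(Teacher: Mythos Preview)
Your reduction and the arguments you sketch for $|f^{q_n}(R)|\asymp|R|$ and for $|f^{q_n}(L)|\asymp|f^{q_n}(R)|$ are sound, but the proof breaks at the step you call ``descend from $f^{q_n}(L)$ back to $L$''. You propose to control the distortion of $f^{q_n}$ on $L$ via Proposition~\ref{Koebe liv}, but that proposition requires an interval $T\supset L$ on which $f^{q_n}$ is a diffeomorphism and such that $f^{q_n}(L)$ sits well inside $f^{q_n}(T)$. On the side of $L$ bordering $\underline{0}$ there is no such room: any enlargement of $L$ across $\partial\underline{0}$ meets $\overline U$, where the first iterate has the singular model $x\mapsto x^{l}$; equivalently, $f^{q_n}(L)=(\partial\underline{0},\underline{q_n})$ abuts $\underline{0}$ with no Koebe space on that side. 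Nor can you replace this by a direct length comparison: relating $|L|=|(\underline{-q_n},\underline{0})|$ to $|f^{q_n}(L)|=|(\underline{0},\underline{q_n})|$ is itself a ``cross the flat interval'' estimate, and the natural routes (for instance via $|I_0^{n-2}|\asymp|L|$ and $|I_0^{n-1}|\asymp|R|$) reduce to the same lemma one level down.

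The paper closes this gap by the move your last paragraph almost anticipates but does not carry out: apply $f$ \emph{once} first. This collapses $\underline{0}$ to the point $\underline{1}$, so $f(A)\cup\{\underline{1}\}\cup f(B)=(\underline{-q_n+1},\underline{-q_{n+1}+1})$ is a genuine interval, and Proposition~\ref{Koebe liv} now applies to $f^{q_n-1}$ on $T=[\underline{-q_n+1},\underline{-q_{n+1}+1}]$ with two--sided Koebe space furnished by the intervals $\underline{-q_n+1}$ and $\underline{-q_{n+1}+1}$. This yields
\[
\frac{|f(A)|}{|f(B)|}\ \asymp\ \frac{|(\underline{0},\underline{q_n})|}{|(\underline{q_n},\underline{-q_{n+1}+q_n})|},
\]
which is then bounded above via $\tau_n$ and bounded below via Theorem~\ref{sw}, Lemma~\ref{big} and Lemma~\ref{aiuto}. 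Finally the \emph{symmetry} of the critical exponent --- the same power $l$ on both sides of $U$ --- gives $|f(A)|/|f(B)|\asymp(|A|/|B|)^{l}$, hence $|A|\asymp|B|$. The missing idea is precisely this: one application of $f$ absorbs the singularity symmetrically and glues the two gaps into a single interval on which ordinary Koebe does the rest.
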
 
\begin{proof}
Because of the symmetry, it is enough to suppose that $n$ is even.\newline

Let $A=\left(\underline{-q_n},\underline{0}\right)$ and let $B=\left(\underline{0},\underline{-q_{n+1}}\right)$; the initial situation is explained in the following figure.
\begin{center}
\xymatrix{
& {\underline{-q_{n}}} \ar[d]& && {\underline{-q_{n+1}}}\ar[d]
 \\
\ar@{-}[rrrrr]&{=}&\ar@{=}[r]^{\underline{0}}& &{=}&  }
\end{center}

 The idea is to apply Proposition \ref{Koebe liv} to 
\begin{itemize}
\item[-]$T=\left[\underline{-q_{n}+1}, \underline{-q_{n+1}+1}\right]$
\item[-]$J=\left(\underline{-q_{n}+1}, \underline{-q_{n+1}+1}\right)$
\item[-]$f^{q_{n}-1}$.
\end{itemize}
%Observe that
%\begin{itemize}
%\item[-] $f^{q_{n}-1}$ is a diffeomorphism on $T$;
%\item[-] the intervals $f^{i}\left(J\right)$ for $i\in\left\{1,\dots,q_n-2\right\}$ are disjoints;
%\item[-] for $n$ big enough, by Lemma \ref{cielo} there exists a constant $K$ such that 
%\begin{displaymath}
%\left|f^{q_{n}-1}\left(J\right)\right|=\left|\left(\underline{0},\underline{-q_{n+1}+q_n}\right)\right|\leq K \left|\underline{-q_{n+1}+q_n}\right|=K %dist\left(f^{q_{n}-1}\left(J\right),\partial f^{q_{n}-1}\left(T\right)\right).
%\end{displaymath}
%\end{itemize}
So, there exists a constant $C$ such that
\begin{equation}\label{cuore}
\frac{\left|f\left(B\right)\right|}{\left|f\left(A\right)\right|}\geq C\frac{\left|\left(\underline{q_{n}},\underline{-q_{n+1}+q_n}\right)\right|}{\left|\left(\underline{0},\underline{q_n}\right)\right|}\geq C\frac{\left|\left(\underline{q_{n}},\underline{-q_{n+1}+q_n}\right)\right|}{\left|\left(\underline{0},\underline{-q_{n-1}}\right)\right|}.
\end{equation}
 By Theorem \ref{sw} and by Lemma \ref{big} there exist two constants $K$ and $K_1$ such that
\begin{displaymath}
\frac{\left|\left(\underline{q_{n}},\underline{-q_{n+1}+q_n}\right)\right|}{\left|\left(\underline{0},\underline{-q_{n-1}}\right)\right|}\geq K\frac{\left|\left(\underline{q_{n}},\underline{-q_{n+1}+q_n}\right)\right|}{\left|\left(\underline{0},\underline{-q_{n+1}}\right)\right|}\geq K_1 K\frac{\left|\left(\underline{-q_{n+1}},\underline{-q_{n+1}+q_n}\right)\right|}{\left|\left(\underline{0},\underline{-q_{n+1}}\right)\right|}.
\end{displaymath}
Observe that the last intervals are contained in the same gap of the partition $n$; so by Lemma \ref{aiuto} they are comparable.\\

Similarly, there exists $C>0$
\begin{equation}\label{ast}
\frac{|f(A)|}{|f(B)|}\geq C\frac{\left|\left(\underline{0},\underline{q_n}\right)\right|}{\left|\left(\underline{q_{n}},\underline{-q_{n+1}+q_n}\right)\right|}\geq C\frac{\left|\left(\underline{0},\underline{q_{n}}\right)\right|}{\left|\left(\underline{0},\underline{q_{n-2}}\right)\right|}\geq C\tau_n.
\end{equation}
which is bounded away from zero by Theorem 2 in \cite{a}.\newline
So, $f(A)$ and $f(B)$ are comparable.\\
Moreover, we recall that near to the flat interval $f(x)=x^l$ hence also $A$ and $B$ are comparable.
%We prove the other inequality with the help of some graph; we apply $f$ to the initial situation %and we have\newline
%\xymatrix{
%& {\underline{-q_{n}+1}} \ar[d]& && {\underline{-q_{n+1}+1}}\ar[d]

%\ar@{-}[rrrrr]&{=}&{\bullet}& &{=}&  \\
%&&{\underline{1}}\ar[u]&}
%After $q_n-1$ iterate and recalling the position of $\underline{-q_{n+1}}$ we can work with %two gaps contained in a gap of the previous partition\newline
%\xymatrix{&&&{\underline{-q_{n+1}}}\ar[d]&& {\underline{-q_{n+1}+q_n}}\ar[d]\\
%\ar@{-}[rrrrr]&\ar@{=}[r]^{\underline{0}}&&{=}&{\bullet}&{=}&\ar@{-}[llll]\\
%&& & & {\underline{q_{n}}}\ar[u]    &  }.
%So, is sufficient to use the Corollary \ref{aiuto} and the fact that the our function $f$ near to %the flat interval is of the form $x^l$, to obtain that, there exists a constant $C_2$ such that  
%\begin{displaymath}
%\frac{|A|}{|B|}\geqslant C_2.
%\end{displaymath}
%In conclusion $A$ and $B$ are comparable.

\end{proof}
\begin{prop}\label{flat}
Two adjacent gaps of the dynamical partition $\mathscr P_{n}$ are comparable. 
\end{prop}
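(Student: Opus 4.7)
The approach is to split adjacent gaps of $\mathscr{P}_n$ into two cases based on whether the two gaps share a parent in $\mathscr{P}_{n-1}$. Case 1 arises when the preimage $\underline{-i}$ separating the two gaps was introduced in the refinement from $\mathscr{P}_{n-1}$ to $\mathscr{P}_n$ (i.e., $i\in[q_n+q_{n-1},q_{n+1}+q_n-1]$); here both gaps sit inside a single long gap of $\mathscr{P}_{n-1}$ and Lemma \ref{aiuto} applies directly.

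In Case 2 the separator $\underline{-i}$ already lies in $\mathscr{P}_{n-1}$, so $0\le i\le q_n+q_{n-1}-1$. Reading off the combinatorics of $\mathscr{P}_n$, the adjacent pair must be either $(I^n_i,I^{n+1}_i)$ (long + short, for $i<q_n$) or $(I^n_i,I^n_{i-q_n})$ (long + long, for $q_n\le i\le q_n+q_{n-1}-1$). The sub-case $i=0$ of the first configuration is precisely Lemma \ref{help}. For $i\ge 1$ I transport the pair by iteration down to the flat interval: apply $f^i$ in the long + short case, landing on $(I^n_0,I^{n+1}_0)$, which is comparable by Lemma \ref{help}; apply $f^{i-q_n}$ in the long + long case, landing on $(I^n_0,I^n_{q_n})$, two long gaps flanking $\underline{-q_n}$. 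For this latter pair, both gaps are comparable to $|\underline{-q_n}|$: by Proposition \ref{figo} one has the upper bound, and by the bounded-geometry estimate $\alpha_n>\tau_n>\mathrm{const}$ from Theorem \ref{sw} one has the matching lower bound (the same argument applied after pulling the configuration back to $\underline 0$ by Koebe distortion of $f^{q_n}$).

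The transport itself is controlled by Proposition \ref{Koebe liv}, applied in the style of Lemma \ref{help}: take a neighborhood $T$ of $G_1\cup\underline{-i}\cup G_2$ whose flanking preimages furnish the Koebe space, let $J$ be its interior, and use the shifted iterate landing just short of the flat singularity, so that the image is a gap of the type already handled near $\underline 0$. Within $J$ the sub-intervals $f(G_1),f(G_2)$ are pushed forward to the desired target pair, and bounded distortion of the single remaining smooth $f$-step near $\underline{-i}$ carries the estimate back to $|G_1|/|G_2|$. The main obstacle, as in Lemma \ref{help}, is the verification of the distance-to-boundary hypothesis of Koebe; this however reduces, via Proposition \ref{figo}, Lemma \ref{cielo}, and the bounded geometry provided by Theorem \ref{sw}, to exactly the estimate already carried out in the proof of Lemma \ref{help}.
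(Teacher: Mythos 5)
Your case decomposition is sound and, after relabelling, is essentially the paper's: the pairs separated by a \emph{newly created} preimage (which you dispatch directly with Lemma \ref{aiuto}) are the paper's Case 3 together with the upper range of its Case 2, while your two ``old separator'' configurations are the paper's Case 1 (long$+$short, transported by $f^{i}$ to the two gaps at the flat interval, i.e.\ Lemma \ref{help}) and the lower range of its Case 2 (long$+$long, transported by $f^{i-q_n}$ to the pair $\left(\underline{-2q_n},\underline{-q_n}\right)$, $\left(\underline{-q_n},\underline{0}\right)$). The transport mechanism via Proposition \ref{Koebe liv} is the same as in the paper; note only that the paper applies Proposition \ref{Koebe liv} forwards twice (once for $|A|/|B|$ and once for $|B|/|A|$), so it never needs your ``bounded distortion of the single remaining smooth $f$-step near $\underline{-i}$'', which is not automatic when $\underline{-i}$ sits close to $\partial U$ and should not be invoked casually.

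The genuine gap is in your treatment of the long$+$long model pair. You assert that both gaps are comparable to $|\underline{-q_n}|$, obtaining the lower bound for $\left(\underline{-2q_n},\underline{-q_n}\right)$ by ``the same argument'' ($\alpha_n>\tau_n>\mathrm{const}$) ``after pulling the configuration back to $\underline{0}$ by Koebe distortion of $f^{q_n}$''. But $f^{q_n}$ carries the preimage $\underline{-q_n}$ onto the flat interval $\underline{0}=U$ itself, so the transported ratio is $|\left(\underline{-q_n},\underline{0}\right)|/|U|$, which tends to zero since $|U|$ is a fixed length; no uniform lower bound can come out of that comparison. If instead you transport the pair consisting of the gap and the preimage $\underline{-2q_n}$, Koebe yields $|\left(\underline{-2q_n},\underline{-q_n}\right)|\geq c\,|\underline{-2q_n}|$, but this compares the gap with the wrong preimage: to conclude comparability with $|\underline{-q_n}|$ (equivalently with the gap $\left(\underline{-q_n},\underline{0}\right)$) you still need $|\underline{-2q_n}|\geq c\,|\underline{-q_n}|$, which your proposal never supplies. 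This is exactly where the paper inserts an additional ingredient: a Lemma \ref{cielo}-type estimate stating that $|\underline{-2q_n}|$ is comparable to the whole interval $\left[\underline{-2q_n},\underline{0}\right)$, which closes the chain $|\left(\underline{-q_n},\underline{0}\right)|/|\left(\underline{-2q_n},\underline{-q_n}\right)|\leq K_1\,|\left(\underline{-q_n},\underline{0}\right)|/|\underline{-2q_n}|\leq K_1\,|\left[\underline{-2q_n},\underline{0}\right)|/|\underline{-2q_n}|\leq K_1K_2$. Without this (or an equivalent control of $|\underline{-2q_n}|$ against its distance to the flat interval) your long$+$long case is not established; the remainder of the proposal, including the direct use of Lemma \ref{aiuto} for newly created separators in place of the paper's Case 3 transport, is correct.
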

\begin{proof}
By the symmetry, without loss of generality, we can suppose that $n$ is even.\\

%In the next cases, in order to don't weigh down reading, we will use Proposition \ref{Koebe liv} without control every time that the hypotheses are satisfied. In %fact the procedure to do this is exactly the same explained in the two previous lemmas.\\

Case 1. For $0\leq i < q_n$ we have\\
\xymatrix{& {\underline{-i-q_{n}}}\ar[d]&{\underline{-i}}\ar[d]& {\underline{-i-q_{n+1}}}\ar[d]  \\
\ar@{-}[rrr]&{=}&{=}&{=}&\ar@{-}[ll]}
\newline
\\
Let $A=(\underline{-i-q_n},\underline{-i})$ and let $B=(\underline{-i},\underline{-i-q_{n+1}})$.\\

For $i=0$ we find the two gaps adjacent to the flat interval which are uniformly comparable, see Lemma \ref{help}.\\

For $i\neq 0$ apply Proposition \ref{Koebe liv} to
\begin{itemize}
\item[-]$T=\left[\underline{-i-q_{n}}, \underline{-i-q_{n+1}}\right]$,
\item[-]$J=\left(\underline{-i-q_{n}}, \underline{-i-q_{n+1}}\right)$,
\item[-]$f^{i}$.
\end{itemize}

%\begin{itemize}
%\item[-] $f^{i+1}$ is a diffeomorphism on $T$; 
%\item[-] any two intervals of the form $f^j\left(J\right)$ with $j\in\left\{1,\dots,i\right\}$ intersect at the maximum one time, so $\sum_{j=0}^{i}\left|f^{j}(J)\right|$ is bounded;
%\item[-] by Proposition \ref{figo} there exists a constant $K$ such that $\left|f^{i+1}(J)\right|=\left|\left(\underline{-q_n+1},\underline{-q_{n+1}+1}\right)\right|\leq K\left |\underline{-q_{n}+1}\right|=K dist\left(f^{i+1}\left(J\right),\partial f^{i+1}\left(T\right)\right)$.
%\end{itemize}
Hence, by Proposition \ref{Koebe liv} and the inequality (\ref{cuore}) there exist two constants $C$ and $C_1$ such that
\begin{displaymath}
\frac{\left|A\right|}{\left|B\right|}\leq C\frac{\left|\left(\underline{-q_{n}+1},\underline{1}\right)\right|}{\left|\left(\underline{1},\underline{-q_{n+1}+1}\right)\right|}\leq CC_1.
\end{displaymath}
Similarly, Proposition \ref{Koebe liv} and the inequality (\ref{ast}) imply that
\begin{displaymath}
\frac{\left|B\right|}{\left|A\right|}\leq C\frac{\left|\left(\underline{1},\underline{-q_{n+1}+1}\right)\right|}{\left|\left(\underline{-q_{n}+1},\underline{1}\right)\right|}\leq CC_2.
\end{displaymath}
where $C$ and $C_2$ are uniform constants.
\\

Case 2. For $q_n\leq i < q_{n+1}$ we have\\
\xymatrix{& {\underline{-i-q_{n}}}\ar[d]&{\underline{-i}}\ar[d]& {\underline{-i+q_{n}}}\ar[d]  \\
\ar@{-}[rrr]&{=}&{=}&{=}&\ar@{-}[ll]}
\newline
\\
Let $A=(\underline{-i-q_n},\underline{-i})$ and let $B=(\underline{-i},\underline{-i+q_{n}})$.\\

We apply Proposition \ref{Koebe liv} to
\begin{itemize}
\item[-]$T=\left[\underline{-i-q_{n}}, \underline{-i+q_{n}}\right]$,
\item[-]$J=\left(\underline{-i-q_{n}}, \underline{-i+q_{n}}\right)$,
\item[-]$f^{i-q_{n}}$.
\end{itemize}
%The hypotheses are satisfied:
%\begin{itemize}
%\item[-] $f^{i-q_{n}}$ is a diffeomorphism on $T$; 
%\item[-] any two intervals of the form $f^j\left(J\right)$ with $j\in\left\{1,\dots,i-q_{n}-1\right\}$ intersect at the maximum one time, so $\sum_{j=0}^{i-q_{n}-1}\left|f^{j}(J)\right|$ is bounded;
%\item[-] by Lemma \ref{cielo} there exists a constant $K$ such that $\left|f^{i-q_{n}}(J)\right|=\left|\left(\underline{-2q_n},\underline{0}\right)\right|\leq K\left |\underline{-2q_{n}}\right|=K dist\left(f^{i-2q_{n}}\left(J\right),\partial f^{i-2q_{n}}\left(T\right)\right)$.
%\end{itemize}
There exists a constant $C$ such that
\begin{displaymath}
\frac{\left|A\right|}{\left|B\right|}\leq C\frac{\left|\left(\underline{-2q_{n}},\underline{-q_n}\right)\right|}{\left|\left(\underline{-q_n},\underline{0}\right)\right|}\leq  C\frac{\left|\left(\underline{-q_{n-2}},\underline{0}\right)\right|}{\left|\left(\underline{-q_n},\underline{0}\right)\right|}\leq CC_1.
\end{displaymath}
$C_1$ is a constant that comes from Theorem \ref{sw}.\\

Moreover by  Lemma \ref{aiuto} and Lemma \ref{cielo} there exist two constants $K_1$ and $K_2$ such that 
\begin{displaymath}
\frac{\left|B\right|}{\left|A\right|}\leq C\frac{\left|\left(\underline{-q_n},\underline{0}\right)\right|}{\left|\left(\underline{-2q_{n}},\underline{-q_n}\right)\right|}\leq C K_1\frac{\left|\left(\underline{-q_n},\underline{0}\right)\right|}{\left|\underline{-2q_{n}}\right|}\leq C K_1\frac{\left|\left[\underline{-2q_n},\underline{0}\right)\right|}{\left|\underline{-2q_{n}}\right|}\leq C K_1K_2 .
\end{displaymath}
\\

Case 3. For $q_{n+1}\leq i < q_n+q_{n+1}$ we have\\
\xymatrix{& {\underline{-i+q_{n+1}}}\ar[d]&{\underline{-i}}\ar[d]& {\underline{-i+q_{n}}}\ar[d]  \\
\ar@{-}[rrr]&{=}&{=}&{=}&\ar@{-}[ll]}
\newline
Let $A=\left(\underline{-i+q_{n+1}},\underline{-i}\right)$ and let $B=(\underline{-i},\underline{-i+q_{n}})$. \\

We apply Proposition \ref{Koebe liv} to
\begin{itemize}
\item[-]$T=\left[\underline{-i+q_{n+1}}, \underline{-i+q_{n}}\right]$,
\item[-]$J=\left(\underline{-i+q_{n+1}}, \underline{-i+q_{n}}\right)$,
\item[-]$f^{i-q_{n+1}}$.
\end{itemize}
%We observe that the hypotheses are satisfied:
%\begin{itemize}
%\item[-] $f^{i-q_{n+1}}$ is a diffeomorphism on $T$; 
%\item[-] any two intervals of the form $f^j\left(J\right)$ with $j\in\left\{1,\dots,i-q_{n+1}-1\right\}$ intersect at the maximum one time $\sum_{j=0}^{i-q_{n+1}-1}\left|f^{j}(J)\right|$ is bounded;
%\item[-] by Lemma \ref{cielo} there exists a constant $K$ such that $\left|f^{i-q_{n+1}}(J)\right|=\left|\left(\underline{0},\underline{-q_{n+1}+q_n}\right)\right|\leq K\left |\underline{-q_{n+1}+q_n}\right|=K dist\left(f^{i-q_{n+1}}\left(J\right),\partial f^{i-q_{n+1}}\left(T\right)\right)$.
%\end{itemize}
There exists a constant $C$ such that
\begin{displaymath}
\frac{\left|A\right|}{\left|B\right|}\leq C\frac{\left|\left(\underline{0},\underline{-q_{n+1}}\right)\right|}{\left|\left(\underline{-q_{n+1}},\underline{-q_{n+1}+q_n}\right)\right|}.
\end{displaymath}
Since $\left(\underline{0},\underline{-q_{n+1}}\right)$ and $\left(\underline{-q_{n+1}},\underline{-q_{n+1}+q_n}\right)$ are contained in the same gap of the partition $n-1$, we can use Lemma \ref{aiuto} to obtain a uniform upper bound.\\

Similarly, by Proposition \ref{Koebe liv} and Lemma \ref{aiuto}, there exist two constants $C$ and $C_1$ such that
\begin{displaymath}
\frac{\left|B\right|}{\left|A\right|}\leq C\frac{\left|\left(\underline{-q_{n+1}},\underline{-q_{n+1}+q_n}\right)\right|}{\left|\left(\underline{0},\underline{-q_{n+1}}\right)\right|}\leq CC_1.
\end{displaymath}

All the possibilities have been analysed, so the proof is complete.

% appling One-Side Koebe lemma to the function $f^{-i-1}$ with initial points $\{$the right point %of $\underline{-q_{n}+1},\underline{1},$the left point of $\underline{-q_{n+1}+1},$the left %point of $\underline{-q_{n-3}+1}\}$ and from the boundedness of the scalings, we have that
%\begin{displaymath}
%\frac{|(\underline{-i},\underline{-i-q_{n+1}})|}{|(\underline{-i-q_{n}},\underline{-i})|}\geq\frac{|(\underline{-i},\underline{-i-q_{n+1}})|}{|(\underline{-i-q_{n}},\underline{-i}]|}\gtrsim\frac{|(\underline{1},\underline{-q_{n+1}+1})|}{|(\underline{1},\underline{-%q_{n-3}+1})|}\geq C
%\end{displaymath}
%For the second one is sufficient to use the one-side Koebe lemma for the function  $f^{-i-1}$ and %initial points $\{$the right point of $\underline{-q_{n}+1},\underline{1},\underline{q_{n}+1},\underline{q_{n-2}+1}\}$. With a similar argument  we can prove the same result for the last %case.\\
%Now, we know that, every gap of the $n^{th}$ partition is greater than the gap on its left; so, %using this fact a finite number of times we can prove that, every gap is also less than that one on its left.
%Finally, every two adjacent gaps are comparable.
\end{proof}
\begin{proof}Proof of Theorem $\ref{2}$.\newline
Let $\mathscr A_{n}$ be the algebra generated by the set of all gaps belonging to the partition $\mathscr P_{n}$.
We define a probability measure $\mu$ on $\mathscr A_{n}$.\newline
For every gap $I\in\mathscr P_{1}$ set $\mu(I)=\frac{1}{\#\mathscr P_{1}}$.
Suppose $\mu$ is already constructed on $\mathscr A_{n}$ and we fix $\mu$ on $\mathscr A_{n+1}$.\newline
If $I_{i}^{n}$ is a long gap of $\mathscr P_{n}$ then
\begin{displaymath}
 I_{i}^{n}=\bigcup_{j=1}^{a_{n+2}}f^{-i-q_n-jq_{n+1}}(U)\cup\bigcup_{j=0}^{a_{n+2}-1}I_{i+q_n+jq_{n+1}}^{n+1}\cup I_{i}^{n+2}
\end{displaymath}
and we set, for every $j\in\{0,\dots, a_{n+2}-1\}$,
\begin{displaymath}
\mu(I_{i+q_n+jq_{n+1}}^{n+1})=\frac{\mu(I_{i}^{n})}{2a_{n+2}}
\end{displaymath}
 and
\begin{displaymath}
\mu(I_{i}^{n+2})=\frac{\mu(I_{i}^{n})}{2}.
\end{displaymath}
If  $I_{i}^{n+1}$ is a short gap of $\mathscr P_{n}$ then $\mu\left(I_{i}^{n+1}\right)$ is already defined.\newline
By Carath\'eodory's extension theorem, there exists a measure (that we will call again $\mu$) which extends $\mu$ on $\sigma(\mathscr A_{1},\mathscr A_{2},..)$.\\

By the definition, $\mu\left(I_{i}^{n}\right)\leq\frac{1}{2^{\frac{n}{2}}}$. 
Proposition \ref{flat} and Corollary \ref{gap} imply that there exist $\lambda_1,\lambda_2\in(0,1)$ such that all gaps satisfy $\lambda_{1}^{n}\leq\left|I_{i}^{n}\right|\leq\lambda_{2}^{n}$. \\%Then, assuming that $\lambda_{1}$ is less than $\frac{1}{\sqrt{2}}$ (this is not restrictive because, if $\lambda_{1}$ is not less than this quantity, then we can replace it by a smaller $\lambda_{1}$) we can pick $\alpha=\log_{\lambda_{1}}\frac{1}{\sqrt{2}}$ and we have that
Therefore,
\begin{displaymath}
\mu\left(I_{i}^{n}\right)\leq\left|I_{i}^{n}\right|^{\alpha}
\end{displaymath}
with $\alpha=\log_{\lambda_1}\frac{1}{\sqrt{2}}$.\\

Let $I$ be an arbitrary interval and let $I_{i}^{n+1}$ be a gap contained in $I$ with $n$ as small as possible. Then $I$ is covered by at most two gaps of the $n^{th}$ partition $I_{j}^{n}$ and $I_{j'}^{n}$ and by preimages of $U$ (which are of $\mu$-measure zero). \\
By the definition of $\mu$
\begin{equation}\label{mis}
\mu\left(I\right)\leq\mu\left(I_{j}^{n}\right)+\mu\left(I_{j'}^{n}\right)\leq C\mu\left(I_{j}^{n}\right)\leq C'\mu\left(I_{i}^{n+1}\right)\leq C' \left|I_{i}^{n+1}\right|^{\alpha}\leq C'\left|I\right|^{\alpha}
\end{equation}

Finally, let $\mathscr{K}$ be an $\epsilon$-cover of the non-wandering set $K$ and let $0<\alpha<1$.\\
By the inequality (\ref{mis}),
\begin{displaymath}
\sum_{I\in\mathscr K}\left|I\right|^{\alpha}\geq\frac{1}{C'}\sum_{I\in\mathscr K}\nu\left(I\right)\geq \frac{1}{C'}\nu\left(K\right)=\frac{1}{C'}.
\end{displaymath}
This establishes the theorem.
\end{proof}
\section{Applications: Cherry Flows}\label{Cherry flow}
\subsection{Basic Definitions}
Let $X$ be a $C^{\infty}$ vector field on the torus $T^2$. Denote the flow through a point $x$ by $t\rightarrow X_t(x)$ and by $Sing(X)$ the set of singularities of $X$. The positive and negative \textit{semi-trajectories} are the sets
\begin{displaymath}
l^{+}(x)=\{ X_t(x) : t\geq 0\} 
\end{displaymath}
\begin{displaymath}
l^{-}(x)=\{ X_t(x) : t\leq 0\}.
\end{displaymath}
The set $l(x)=l^{+}(x)\cup l^{-}(x)$ is called the \textit{trajectory} through the point $x$. If $l(x)=x$ then $x$ is called a \textit{fixed point} or a \textit{singularity}. If $x\in T^2$ is not a fixed point then it is called a \textit{regular} point and $l(x)$ will be called a \textit{one-dimensional} trajectory. If $l(x)$ is homeomorphic to a circle $\mathbb S^1$, then $l(x)$ is a \textit{closed} trajectory or a \textit{periodic} trajectory. The trajectory $l(x)$ is called a \textit{non-closed} trajectory if $l(x)$ is neither a fixed point nor a periodic trajectory. The $\omega$-\textit{limit set} of the positive semi-trajectory $l^{+}(x)$ is the set
\begin{displaymath}
\omega[l^{+}(x)]=\left\{y;\exists t_n\rightarrow\infty \textmd{ with } X_{t_n}(x)\rightarrow y \right\},
\end{displaymath}
and $\alpha$-\textit{limit set} of the negative  semi-trajectory $l^{-}(x)$ is
\begin{displaymath}
\alpha[l^{-}(x)]=\left\{y;\exists t_n\rightarrow\infty \textmd{ with } X_{-t_n}(x)\rightarrow y \right\}.
\end{displaymath}
The $\omega$-limit set ($\alpha$-limit set) of any positive (negative) semi-trajectory of the trajectory $l$ is called $\omega$-limit set $\omega(l)$ of $l$ ($\alpha$-limit set $\alpha(l)$ of $l$). The union of $\omega$-limit sets of all trajectories of the flow $X_t$ is called the $\omega$-limit set $\omega (X_t)$ of $X_t$ (analogously for the $\alpha$-limit set $\alpha (X_t)$ of $X_t$ ). The union $\lim(X_t)=\omega(X_t)\cup\alpha(X_t)$ is said to be the \textit{limit set} of $X_t$.\\
The trajectory is $\omega$-recurrent ($\alpha$-recurrent), if it is contained in its $\omega$-limit set ($\alpha$-limit set). The trajectory is \textit{recurrent} if it is both $\omega-$recurrent and $\alpha-$recurrent. A recurrent trajectory is \textit{non-trivial} if it is neither a fixed point nor a periodic trajectory.\\
Let $S$ be a subset of $T^2$. We denote by $W^{s}(S)$ the set of points in $T^2$ that have $S$ as $\omega$-limit (it is called the \textit{stable manifold} of $S$) and  by $W^{u}(S)$ the set of points that have $S$ as $\alpha$-limit (it is called the \textit{unstable manifold} of $S$).
\begin{defin}
A Cherry field is a $\mathfrak C^{\infty}$ vector field on the torus $T^2$ without closed trajectories which has exactly two singularities, a sink and a saddle, both hyperbolic.
\end{defin}
The first example of such a field was given by Cherry in $1938$.
\begin{defin}
The Poincar\'e global section is a transversal, simple, closed $C^{\infty}$ curve $\Sigma$ which intersects every one-dimensional trajectory of the flow.
\end{defin}
\begin{fact}
It is proved in \cite{cherry} that every Cherry flow admits a Poincar\`e global section.
\end{fact}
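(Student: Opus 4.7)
The stated Fact is attributed to \cite{cherry}, so the ``proof'' in the paper will surely be a single citation. Nevertheless, here is how I would sketch an independent argument for the existence of a Poincar\'e global section of a Cherry flow.

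The plan is to exploit the two structural features of a Cherry flow: (i) the hyperbolicity of the two singularities, and (ii) the absence of closed trajectories. First I would linearize the flow near the saddle $q$ and the sink $p$, obtaining local coordinates in which the stable and unstable manifolds are coordinate axes. Near $q$, pick a short smooth arc $\Sigma_{\mathrm{loc}}$ transversal to the flow, crossing $W^{s}(q)$ exactly once; its two endpoints lie on opposite sides of $W^s(q)$. The next step is to extend $\Sigma_{\mathrm{loc}}$ into an embedded closed curve $\Sigma \subset T^2$. I would do this by pushing the endpoints of $\Sigma_{\mathrm{loc}}$ along the flow (in positive or negative time depending on the side) until they meet; transversality to the flow at each point is an open condition, and since the tangency locus is codimension one we can arrange by a small perturbation, inside the Cherry class, that the extended curve remains transversal everywhere and misses both $p$ and $q$.

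To see that every one-dimensional trajectory hits $\Sigma$, I would split the regular trajectories into two families. The first family consists of trajectories whose $\omega$-limit set equals $\{p\}$; by construction $\Sigma$ is threaded through the basin of attraction of $p$ in such a way that every inflowing trajectory crosses it. For the second family, whose $\omega$-limit sets are contained in the quasi-minimal set $Q$, I would invoke Poincar\'e--Bendixson-type reasoning on $T^2$: since there are no closed orbits, $Q$ necessarily carries a non-trivial rotation behavior, so one lifts to the universal cover $\mathbb{R}^2$ and observes that all lifts of trajectories in $Q$ travel uniformly in a fixed homology direction. Choosing $\Sigma$ in the transverse homology class therefore forces every trajectory in $Q$ to meet $\Sigma$ infinitely often. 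The two remaining delicate trajectories are the two branches of $W^s(q)$ themselves, and these are handled by the explicit local choice of $\Sigma_{\mathrm{loc}}$ crossing $W^s(q)$ at $q$.

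The main obstacle in this plan is controlling the global topology: producing a smoothly embedded, everywhere transversal, \emph{closed} curve in the correct homology class, without inadvertently crossing singularities or developing tangencies where the flow direction rotates. In particular, the extension step near the basin boundary of $p$ requires care, because $\partial W^s(p)$ contains $W^u(q)$ and the recurrent branch of $W^u(q)$ accumulates on $Q$; making $\Sigma$ thread this boundary cleanly, while remaining transversal, is the real technical content of the argument in \cite{cherry}.
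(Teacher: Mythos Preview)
Your prediction is correct: the paper provides no proof of this Fact whatsoever---it is stated and immediately attributed to \cite{cherry}, with no argument, sketch, or further comment. There is nothing in the paper to compare your sketch against.

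As for the sketch itself, it is a plausible outline but not yet a proof. The chief gap is the ``extension'' step: pushing the endpoints of $\Sigma_{\mathrm{loc}}$ along the flow does not in general produce a curve transversal to the flow (a flow line is tangent to the flow, not transverse), so the mechanism by which you close up $\Sigma_{\mathrm{loc}}$ into a global closed transversal needs to be different---typically one constructs a closed $1$-form (or a circle-valued Lyapunov-type function) whose level sets are transverse, using the absence of closed orbits and the hyperbolicity of the singularities, rather than attempting a direct geometric surgery. Your discussion of homology classes and the quasi-minimal set is in the right spirit, but it presupposes the existence of the very transversal you are trying to build. You yourself flag the real difficulty at the end, which is fair; just be aware that the standard arguments (as in \cite{cherry} or \cite{aran}) proceed via the structure theory of flows on surfaces (Cherry cells, sector decompositions near singularities, and the classification of limit sets) rather than by an ad hoc extension of a local arc.
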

\begin{defin}
Let $l$ be a non-trivial recurrent trajectory. Then the closure $\overline{l}$ of $l$ is called a quasi-minimal set.
\end{defin}
The following fact was proved in \cite{cherry}.
\begin{fact}\label{prod}
Every Cherry field has a quasi-minimal set which is locally homeomorphic to the Cartesian product of a Cantor set and a segment.
\end{fact}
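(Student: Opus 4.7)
The plan is to transfer the analysis to the first return map on the Poincar\'e section and then recover the two-dimensional structure of the quasi-minimal set by flow-saturation. Since $X$ admits a global Poincar\'e section $\Sigma$, the first return map $f:\Sigma\to\Sigma$ is a circle endomorphism belonging to the class $\mathscr{L}$: the flat interval $U\subset\Sigma$ is precisely the set of points whose forward trajectory hits the stable manifold of the saddle before returning to $\Sigma$. Such orbits all converge to the saddle and are re-emitted along a single branch of its unstable manifold, which explains why $f(U)$ collapses to a point. By Theorem C of \cite{on}, the non-wandering set of $f$ is therefore $K:=\mathbb{S}^{1}\setminus\bigcup_{i\geq 0}f^{-i}(U)$.

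First I would identify $Q\cap\Sigma$ with $K$, where $Q=\overline{l}$ is the quasi-minimal set associated with a non-trivial recurrent trajectory $l$. Since $l$ is recurrent, its successive hits on $\Sigma$ form an $f$-orbit whose closure is contained in the non-wandering set $K$. Conversely, the semi-conjugacy $h$ between $f$ and the irrational rotation $R_{\rho}$ forces the $f$-orbit of any point of $K$ to be dense in $K$, so $K\subseteq Q\cap\Sigma$. The same semi-conjugacy argument shows that $K$ is a Cantor set: it is closed by construction, has empty interior because the preimages of $U$ are dense in $\mathbb{S}^{1}$, is perfect because each of its orbits is infinite, and is totally disconnected because the dense collection of open arcs $f^{-i}(U)$ separates any two of its points.

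Next I would produce the local product structure by flow-saturation. For any regular point $p\in Q$, the flow-box theorem yields local coordinates $(s,\tau)\in(-\epsilon,\epsilon)\times(-\epsilon,\epsilon)$ in which $X=\partial/\partial s$. Pushing the transversal $\{s=0\}$ along the flow by a bounded amount of time brings it into $\Sigma$, and the trajectories of $Q$ crossing this transversal are exactly those through points of $K$. The local picture of $Q$ is thus $(-\epsilon,\epsilon)\times C_{p}$, where $C_{p}$ is homeomorphic to a relatively open subset of $K$, hence a Cantor set. This yields the product structure claimed by the statement at every regular point of $Q$.

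The principal obstacle will be the behaviour near the saddle singularity $p_{0}$. Because any non-trivial recurrent trajectory accumulates on the stable manifold of the saddle, the point $p_{0}$ itself lies in $Q$, and the plain flow-box description breaks down in any neighbourhood of $p_{0}$. I would deal with this by invoking the hyperbolicity of $p_{0}$ and a Hartman--Grobman linearization: in linearizing coordinates, the pair consisting of the stable and unstable manifolds provides a one-dimensional Cantor-like lamination in each transverse direction, and only countably many orbits of $Q$ are asymptotic to $p_{0}$. Consequently the Cantor-times-segment picture persists on a dense open subset of $Q$, which is the accepted reading of ``locally homeomorphic'' in this setting. The most delicate part of the argument, and the one I would expect to require the most care, is verifying that the collapsing of orbits through the flat interval does not introduce additional identifications that would violate the product structure at regular points.
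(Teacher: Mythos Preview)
The paper does not prove this statement: it is recorded as a \emph{Fact} and attributed directly to \cite{cherry} (Nikolaev--Zhuzhoma), with no argument supplied. There is therefore no ``paper's own proof'' to compare your proposal against; you have written a genuine proof sketch where the paper simply quotes the literature.

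That said, your outline is broadly on the right track: pass to the first return map, identify $Q\cap\Sigma$ with the non-wandering Cantor set $K$ of $f$, and then use flow-box charts to obtain the local product structure at regular points. Two places deserve more care. First, the identification $Q\cap\Sigma=K$ needs the observation that $h$ collapses exactly the closures of the preimages of $U$, so that $K$ is the unique minimal set of $f$ on which every orbit is dense; your appeal to the semi-conjugacy is correct in spirit but should be stated as ``$h^{-1}$ of a point is either a single point of $K$ or a closed wandering arc,'' which is what actually yields both perfectness and total disconnectedness. Second, your treatment of the saddle is the weakest part: the fact as stated asserts the product structure \emph{locally}, and the saddle genuinely belongs to $Q$, so you cannot simply retreat to a dense open subset. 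The standard resolution (and what \cite{cherry} does) is to note that in Hartman--Grobman coordinates the local stable and unstable separatrices each carry a Cantor transversal inherited from $K$, and the hyperbolic flow near the saddle glues these into a single lamination whose leaf space is still a Cantor set; the ``segment'' factor is then an arc of trajectory, possibly broken at $p_0$. Your final worry about extra identifications from the flat interval is unfounded: the collapsing happens entirely inside the wandering set $\bigcup_{i\geq0}f^{-i}(U)$, which is disjoint from $K$ by construction.
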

\subsection{First Return Map}
Let $X$ be a Cherry field with a global Poincar\`e section $\Sigma$. Notice that $T^2\setminus\Sigma$ is $C^{\infty}$-equivalent to an annulus $\mathbb S^1\times(0,1)$ and we can write $T^2\cong\mathbb S^1\times\left[0,1\right]/\sim$, where $\left(s,0\right)\sim\left(s,1\right)$. Consider $X$ as a flow on $T^2\cong\mathbb S^1\times\left[0,1\right]$ where we identify $\mathbb S^1\times\left\{0\right\}$ and $\mathbb S^1\times\left\{1\right\}$.\newline
After this change of coordinates, the resulting field is a Cherry field.\\
We denote by $W$ the set of points $x\in\mathbb S^1\times\left\{0\right\}$ with $X_t(x)\in\mathbb S^1\times\left\{1\right\}$ for some $t>0$. For $x\in W$, let $t(x)$ be the minimal $t>0$ such that $X_{t(x)}(x)\in\mathbb S^1\times\left\{1\right\}$ and define the map $f:W\rightarrow\mathbb S^1$ by $f(x)=X_{t(x)}(x)$. This map is called the \textit{first return map} to $\mathbb S^1$.\newline
We define $f$ on $U=(\mathbb S^1\setminus W)$ as $f(U)=v$ where $v=W^u(S)\cap\mathbb S^1\times\left\{1\right\}$.\\
For every point $x\in(\mathbb S^1\setminus U)\times\left\{0\right\}$ near $\partial U$, there exists $t>0$ such that $X_t(x)$ intersects $\mathbb S^1\times\left\{1\right\}$ near $v$ (for more details see the section $2$ of the chapter $7$ in \cite{aran}).  In particular $\lim_{x\rightarrow\partial U, x\notin U}f(x)$ consists of a single point.\\

So, we have a function $f$ of the circle, which is  everywhere continuous and $C^{\infty}$ outside the boundary points of $\mathbb S^1\setminus U$.
\subsection{Properties of the First Return Map}
Let $X$ be a Cherry vector field, with a saddle point that has eigenvalues $\lambda_{1}>0>\lambda_{2}$. Let $\mathbb S^1$ be a global Poincar\'e section and let $f$ be the first return map to $\mathbb S^1$.\newline
Then: 
\begin{enumerate}
\item $f$ is order preserving.
\item It is constant on an interval $U$.
\item The restriction of $f$ to $\mathbb S^1\setminus{\overline{U}}$ is a $C^{\infty}$-diffeomorphism onto its image.\newline
\item Let $\pi\left(a,b\right)=U$. On some right-sided neighborhood of $b$, $f$ can be represented as
\begin{displaymath}
h_r\left(\left(x-b\right)^{\alpha}\right),
\end{displaymath}
where $\alpha=\frac{\left|\lambda_2\right|}{\lambda_1}$ and $h_r$ is a $C^{\infty}$-diffeomorphism on a neighborhood of $b$.
Analogously, on a left-sided neighborhood of $a$, $f$ is 
\begin{displaymath}
h_l\left(\left(a-x\right)^{\alpha}\right).
\end{displaymath}
\item Since $X$ has no periodic trajectories, the rotation number of $f$, $\rho(f)$, is irrational.
\end{enumerate}
These are well-known properties of $f$, the reader can consult for example $\cite{aran}$ or $\cite{cherry}$.\\

The first return map $f$ belongs to the class $\mathscr{L}$ from Section \ref{class}.
\begin{lemma}
There exists a Cherry vector field with first return map which has rotation number of bounded type.
\end{lemma}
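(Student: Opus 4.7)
The plan is to produce a one-parameter family of Cherry vector fields whose first return maps have rotation numbers sweeping through an interval, and then invoke density of bounded-type irrationals.

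First I would fix a reference Cherry vector field $X_0$ on $T^2$ with saddle $p$, sink $q$, hyperbolic eigenvalues $\lambda_1>0>\lambda_2$ at $p$, and global Poincar\'e section $\Sigma=\mathbb S^1\times\{0\}$ (the existence of such a field being Cherry's original example, plus the section coming from Fact above). Let $V$ be a smooth vector field on $T^2$ which vanishes in neighborhoods of $p$ and $q$, is everywhere transverse (in the positively oriented sense along $\Sigma$) to the flow direction of $X_0$ where it is nonzero, and is nonzero on an open arc of $\Sigma$. Define $X_t := X_0 + t\,V$ for $t\in[0,t_0]$ with $t_0$ small. By hyperbolicity and structural stability of the saddle and sink, for $t_0$ small enough the field $X_t$ still has exactly two singularities, both hyperbolic, of the same type and with eigenvalue ratio preserved near the saddle. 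Since $V\equiv 0$ in a neighborhood of $p$, the local form of the first return map near the boundary of the flat interval is unchanged, so $X_t$ remains in the Cherry class and $\Sigma$ remains a global section. Let $f_t$ be the corresponding first return map in $\mathscr L$.

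Next I would establish that $t\mapsto \rho(f_t)$ is continuous and non-constant. Continuity of the rotation number under $C^0$ perturbation of a degree-one circle map is classical. For monotonicity, the choice of $V$ as a positive drift along $\Sigma$ forces $f_t(x)\geq f_0(x)+c\,t$ for a uniform $c>0$ on the complement of $\overline U$, and by continuity extends across $U$ in the lifted map. Monotonicity of Poincar\'e's rotation number under pointwise increase of the lift then gives $\rho(f_t)\geq \rho(f_0)+c\,t$ for all small $t\geq 0$. Hence the image $\{\rho(f_t):t\in[0,t_0]\}$ contains a nondegenerate interval $I\subset\mathbb R/\mathbb Z$.

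Finally, the set of irrationals of bounded type is dense in $\mathbb R$ (for instance every quadratic irrational, shifted into $I$ by an integer translation, has bounded type), so there exists $t^*\in[0,t_0]$ with $\rho(f_{t^*})$ irrational and of bounded type. The vector field $X_{t^*}$ is the required Cherry vector field.

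The main obstacle is the verification in the construction step that the perturbation $X_t$ genuinely preserves the Cherry structure: that no closed trajectories are created, that the unstable manifold of the saddle still meets $\Sigma$ transversally so that the flat interval $U_t$ and its image point $v_t$ persist, and that the singularity data $(\lambda_1,\lambda_2)$ at $p$ are unchanged so that the critical exponent remains $l=|\lambda_2|/\lambda_1$. All of this is a consequence of the hyperbolicity of $p$ and $q$ together with the local support condition on $V$, but it is the only genuinely non-formal content of the argument.
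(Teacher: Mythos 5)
Your overall strategy is the same as the paper's: perturb the Cherry field monotonically (the paper simply adds a constant horizontal field $Y_{\epsilon}$ of size $\epsilon$, you add $tV$ with $V$ supported away from the singularities), observe that the family of first return maps is monotone so that $t\mapsto\rho(f_t)$ is continuous and non-decreasing, and then pick a parameter where the rotation number is a bounded-type irrational, using density of such numbers. Your extra care about persistence of the Cherry structure (choosing $V$ to vanish near the saddle and sink so the local exponent $l=|\lambda_2|/\lambda_1$ is untouched) is a reasonable refinement of the paper's terser construction.

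There is, however, one step that is wrong as stated: from $f_t\geq f_0+ct$ you infer $\rho(f_t)\geq\rho(f_0)+c\,t$. Pointwise domination of lifts only gives monotonicity of the rotation number, not a linear lower bound; because of mode-locking (the devil's staircase), $\rho$ of a monotone family can increase arbitrarily slowly, and in general is constant on parameter intervals where it is rational, so the inequality $\rho(F+c)\geq\rho(F)+c$ fails. The way to get a nondegenerate interval of rotation numbers --- and the way the paper does it --- is to use that $\rho(f_0)$ is \emph{irrational} (a Cherry flow has no closed trajectories), since at an irrational value a monotone family cannot be mode-locked: $\rho(f_t)>\rho(f_0)$ for every $t>0$, and with continuity this already yields a nondegenerate interval of values. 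This also lets you drop the uniform lower bound $f_t(x)\geq f_0(x)+ct$ on $\mathbb S^1\setminus\overline U$, which is itself doubtful near $\partial U$ (orbits there pass arbitrarily close to the saddle, where your $V$ vanishes and displacements are strongly distorted); all you need is $f_t\geq f_0$ with the family strictly increasing in $t$. With that repair your argument matches the paper's proof.
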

\begin{proof}
Let $X$ be a Cherry vector field with a global Poincar\'e section $\mathbb S^1$.\\
For every $\epsilon> 0$,  we define $Y_{\epsilon}$ as a constant horizontal vector field with all vectors of the same length $\epsilon$.
Let $X_{\epsilon}=X+Y_{\epsilon}$ and denote by $f_{\epsilon}$ the first return map of $X_{\epsilon}$ to $\mathbb S^1$.\\
The family $(f_{\epsilon})$ is increasing and hence $\rho_{\epsilon}=\rho(f_{\epsilon})$ is an increasing and continuous function. 
Since $\rho_0$ is irrational, $\rho_{\epsilon}$ is strictly increasing at zero and hence, there is a $\epsilon_{0}$ so that $\rho_{\epsilon_{0}}$ is an irrational number of bounded type.
\end{proof}
\subsection{ Proof of Theorem \ref{3}}
\begin{proof}
We denote by $\HD(M)$ the Hausdorff dimension of a set $M$ and by $Q$ the quasi-minimal set of $X$. By Fact \ref{prod}, in a small neighborhood of the Poincar\'e global section, $Q$ is equivalent, by a $C^2$ diffeomorphism, to $I\times \Omega$ with $\Omega$ the non-wandering set. \\
So, by Theorem \ref{2} and the product formula, Theorem 8.10 in \cite{ka},
\begin{displaymath}
\HD(Q)\geqq \HD(I\times \Omega)\geqq \HD(I)+\HD(\Omega)> 1.
\end{displaymath}

\end{proof}
\appendix
\section{Appendix}
In the following we explain how to remove the hypothesis on the Schwarzian derivative in the first claim of Theorem 2 in \cite{a}. \\

The negative Schwarzian is used only to prove the expansion property of the cross-ratio \textbf{Poin} which is defined on ordered quadruples $a<b<c<d$ by
\begin{displaymath}
\textbf{Poin}\left(a,b,c,d\right):=\frac{\left|d-a\right|\left|b-c\right|}{\left|c-a\right|\left|d-b\right|}.
\end{displaymath}
Diffeomorphisms with negative Schwarzian derivative increase the cross-ratio \textbf{Poin}:
\begin{displaymath}
\textbf{Poin}\left(f(a),f(b),f(c),f(d)\right)>\textbf{Poin}\left(a,b,c,d\right).
\end{displaymath}\\
In general, without the assumption of negative Schwarzian, the following holds.
\begin{theo}\label{point}
Let $f$ be a $C^2$ map with no flat critical points. There exists a bounded increasing function $\sigma:\left[0,\infty\right)\to\mathbb R_+$ with $\sigma(t)\to 0$ as $t\to 0$ with the following property. Let $\left[b,c\right]\subset\left[a,d\right]$ be intervals such that $f^n_{|\left[a,d\right]}$ is a diffeomorphism. Then
\begin{displaymath}
\textbf{Poin}\left(f^n(a),f^n(b),f^n(c),f^n(d)\right)\geqq exp\{-\sigma(\tau)\sum_{i=0}^{n-1}|f^{i}(\left[a,b\right))|\}\textbf{Poin}\left(a, b, c, d\right),
\end{displaymath}
where $\tau=\max_{i=0,\dots,n-1}|f^{i}(\left(c,d\right])|$.
\end{theo}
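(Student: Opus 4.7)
The plan is to reduce the $n$-step statement to a single-step distortion inequality via telescoping, and then prove the one-step estimate using the $C^2$ structure of $f$, with non-flat critical points handled by factoring out a power map. Taking logarithms of the target inequality, one has
\[
\log \textbf{Poin}(f^n(a),f^n(b),f^n(c),f^n(d)) - \log \textbf{Poin}(a,b,c,d) = \sum_{i=0}^{n-1} \Delta_i,
\]
where $\Delta_i$ denotes the change in $\log\textbf{Poin}$ under one iterate of $f$ applied to the quadruple $(f^i(a),f^i(b),f^i(c),f^i(d))$. It therefore suffices to prove the one-step bound $\Delta_i \geq -\sigma_1(|f^{i+1}([c,d])|)\cdot |f^i([a,b))|$ for some bounded increasing $\sigma_1$ with $\sigma_1(t)\to 0$ as $t\to 0$; monotonicity of $\sigma_1$ together with $|f^{i+1}([c,d])|\leq\tau$ then gives the claim upon summing, with $\sigma:=\sigma_1$.

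To prove the one-step estimate, assume first that $f$ has no critical point inside $[a,d]$. Writing the ratio
\[
\frac{\textbf{Poin}(f(a),f(b),f(c),f(d))}{\textbf{Poin}(a,b,c,d)} = \frac{A[a,d]\,A[b,c]}{A[a,c]\,A[b,d]},
\]
where $A[u,v]:=\int_0^1 f'(u+s(v-u))\,ds$ is the average of $f'$ on $[u,v]$, taking logarithms and grouping gives
\[
\bigl(\log A[a,d] - \log A[b,d]\bigr) - \bigl(\log A[a,c] - \log A[b,c]\bigr).
\]
Each bracket is the change of $\log A$ when the left endpoint moves from $b$ to $a$ with the right endpoint held fixed at $d$ or $c$, respectively. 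A second-order Taylor expansion of $\log A[u,v]$ in the right-endpoint variable shows that the difference of the two brackets is bounded below by $-C\,|[a,b]|\cdot \omega_f(|f([c,d])|)$, where $\omega_f$ is a modulus of continuity of $f''/f'$ at the image scale. Since $f$ is $C^2$ with $f'$ bounded away from zero on compact subsets avoiding critical points, $\omega_f(t)\to 0$ as $t\to 0$, yielding the desired $\sigma_1$ in this regime.

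The principal obstacle is near critical points of $f$, where $f'$ vanishes. There the non-flat hypothesis permits a local factorisation $f = \phi_{c_0}\circ q_{c_0}$, with $q_{c_0}(x)=(x-c_0)^{\ell_0}$ a power map and $\phi_{c_0}$ a $C^2$ diffeomorphism on a fixed neighbourhood. A direct calculation shows that $q_{c_0}$ has strictly negative Schwarzian derivative and hence strictly increases $\textbf{Poin}$, so the contribution of the $q_{c_0}$-factor to $\Delta_i$ is non-negative and may be dropped, while $\phi_{c_0}$ is handled by the diffeomorphic case above with a modulus depending only on $\phi_{c_0}$. Finitely many critical points, together with uniform $C^2$ control on the complement of their neighbourhoods, then combine into a single bounded increasing modulus $\sigma$ depending only on $\tau$ and not on $n$, completing the plan.
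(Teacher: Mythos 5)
First, note that the paper does not prove this statement at all: Theorem \ref{point} is quoted in the appendix as a known result (it is the cross-ratio inequality for $C^2$ maps with non-flat critical points, the same Koebe/cross-ratio machinery as in the reference cited for Proposition \ref{Koebe liv}), so your attempt has to be judged against that known but genuinely delicate theorem rather than against an argument in the paper. Your overall architecture -- telescoping $\log\textbf{Poin}$ over the iterates, a one-step estimate for the diffeomorphic part, and the factorisation $f=\phi\circ q$ with $q$ a power map of negative Schwarzian near the critical points -- is the natural one, and the critical-point part is the standard, correct use of non-flatness. But that factorisation only reduces everything to the one-step estimate for a $C^2$ diffeomorphism, and that is exactly where the gap is.

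The one-step bound $\Delta_i\geq-\sigma_1(\cdot)\,\bigl|f^i([a,b))\bigr|$ is asserted via ``a second-order Taylor expansion shows'', but this route does not deliver the stated theorem. Writing $\phi=\log g'$ for the diffeomorphism $g$, the one-step defect is
\begin{displaymath}
\log\frac{\textbf{Poin}(g(a),g(b),g(c),g(d))}{\textbf{Poin}(a,b,c,d)}=\int_{a}^{b}\!\!\int_{c}^{d}\Bigl[\frac{1}{(v-u)^{2}}-\frac{g'(u)g'(v)}{(g(v)-g(u))^{2}}\Bigr]\,dv\,du,
\end{displaymath}
and the best a Taylor/modulus-of-continuity estimate gives pointwise is $\log\frac{g'(u)g'(v)(v-u)^{2}}{(g(v)-g(u))^{2}}\leq\frac{|v-u|}{2}\,\omega(|v-u|)$ with $\omega$ the modulus of continuity of $g''/g'$, so the integrand is only bounded below by $-\,\omega(|v-u|)/|v-u|$ up to constants. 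Since $v-u$ ranges over $[\,|[b,c]|,\,|[a,d]|\,]$, integrating yields a bound of the type $|[a,b]|\int_{|[b,c]|}^{|[a,d]|}\omega(s)\,s^{-1}ds$, which (i) degenerates as the middle interval $[b,c]$ shrinks unless $\omega$ satisfies a Dini condition -- something $C^2$ alone does not provide -- and (ii) even when finite, produces a modulus evaluated at the scale of the whole interval $[a,d]$ (hence, after iteration, at $\max_i|f^i([a,d])|$, which involves the images of the middle interval), not at $\tau=\max_i|f^i((c,d])|$. The asymmetric dependence -- the sum taken only over images of $[a,b)$ while $\sigma$ sees only the images of $(c,d]$, with no dependence on $[b,c]$ -- is precisely the nontrivial content of the theorem (and what the appendix actually needs, since there the middle intervals are not summable), and your sketch contains no mechanism that produces it; the published proofs obtain it by a different, more careful argument (mean-value representations of the cross-ratio distortion with explicit bookkeeping of which side each error term charges), not by a pointwise second-order expansion. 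There is also a smaller mismatch you should fix in any case: your one-step bound is phrased with $|f^{i+1}([c,d])|$, but $\tau$ as defined only controls $|f^i((c,d])|$ for $i\leq n-1$, so for $i=n-1$ you would need $|f^n((c,d])|$, which the hypothesis does not bound.
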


We recall the notation used in \cite{a}, let
\begin{displaymath}
\alpha_n=\frac{\left|\left(\underline{-q_n},\underline{0}\right)\right|}{\left|\left[\underline{-q_n},\underline{0}\right)\right|},
\end{displaymath}
\begin{displaymath}
\sigma_n=\frac{\left|\left(\underline{0},\underline{q_n}\right)\right|}{\left|\left(\underline{0},\underline{q_{n-1}}\right)\right|}
\end{displaymath}
and 
\begin{displaymath}
s_n=\frac{\left|\left[\underline{-q_{n-2}},\underline{0}\right]\right|}{\left|\underline{0}\right|}.
\end{displaymath}
\begin{theo}
For $n$ sufficiently large the following inequality holds:
\begin{equation}\label{alfa}
\left(\alpha_n\right)^l\leq K_{0,n} K_{1,n},\dots, K_{{a_n-1},n} C_n \tilde{M_n}\left(l\right)\alpha_{n-2}^2,
\end{equation}
where for every $i\in{0,\dots,a_n-1}$ if we denote by
\begin{displaymath}
\tau_{i,n}=\max_{j=0,\dots,q_{n-1}-2}\left|f^{j}\left(\left(\underline{iq_{n-1}+1},\underline{-q_{n-1}+1}\right]\right)\right|
\end{displaymath}
and by
\begin{displaymath}
\rho_n=\max_{j=0,\dots,q_{n-2}-2}\left|f^{j}\left(\left(\underline{a_nq_{n-1}+1},\underline{1}\right]\right)\right|
\end{displaymath}
then
\begin{equation}\label{dom}
K_{i,n}=e^{\sigma\left(\tau_{i,n}\right)\sum_{j=0}^{q_{n-1}-2}|f^{j}(\underline{-q_n+iq_{n-1}+1})|},
\end{equation}
\begin{displaymath}
C_n=e^{\sigma\left(\rho_n\right)\sum_{j=0}^{q_{n-2}-2}|f^{j}(\underline{-q_{n-2}+1})|}
\end{displaymath}
and
\begin{displaymath}
\tilde{M_n}\left(l\right)=s_{n-1}^2\cdot\frac{2}{l}\cdot\left(\frac{1}{1+\sqrt{1-\frac{2(l-1)}{l}C_ns_{n-1}\alpha_{n-1}}}\right)\cdot\frac{1}{1-\alpha_{n-2}}\cdot\frac{\sigma_n}{\sigma_{n-2}}.
\end{displaymath}
\end{theo}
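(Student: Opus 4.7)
The plan is to follow the strategy in the proof of Theorem 2 of \cite{a}, replacing each application of the cross-ratio inequality under negative Schwarzian by the more general Theorem \ref{point}. Each use of Theorem \ref{point} produces a multiplicative correction of the form $e^{\sigma(\tau)\sum|f^j(\cdot)|}$ in place of the neutral factor $1$ that was freely available under negative Schwarzian; the factors $K_{i,n}$, $C_n$ and the auxiliary terms inside $\tilde M_n(l)$ are precisely these corrections accumulated along the orbit.

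First I would set up the quadruples carrying the geometric information. For each $i\in\{0,\dots,a_n-1\}$, one considers an interval on which $f^{q_{n-1}-1}$ is a diffeomorphism and whose image under this iterate contains the points near $\underline{0}$ that encode $\alpha_n$. Applying Theorem \ref{point} to this diffeomorphism with $\tau=\tau_{i,n}$ yields an estimate of the form
\begin{displaymath}
\textbf{Poin}(f^{q_{n-1}-1}(a),f^{q_{n-1}-1}(b),f^{q_{n-1}-1}(c),f^{q_{n-1}-1}(d))\geqq K_{i,n}^{-1}\,\textbf{Poin}(a,b,c,d),
\end{displaymath}
with $K_{i,n}$ as in \eqref{dom}. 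Chaining together the $a_n$ such cross-ratio estimates, one for each $i$, produces the full product $K_{0,n}K_{1,n}\cdots K_{a_n-1,n}$ on the right-hand side.

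Next I would handle the passage through the flat interval. Because $f$ behaves like $x^l$ near the boundary of $\underline{0}$, pulling back the ratio $\alpha_n$ once through $f$ raises it to the $l$-th power, which is exactly the left-hand side $(\alpha_n)^l$ of \eqref{alfa}. A further application of Theorem \ref{point}, this time to the diffeomorphism $f^{q_{n-2}-1}$ with $\tau=\rho_n$, contributes the correction $C_n$. Collecting the remaining geometric terms --- the ratios $s_{n-1}^2$ coming from how the interval containing $\underline{-q_{n-2}}$ relates to the flat interval, the factor $\sigma_n/\sigma_{n-2}$ describing the refinement between partitions of orders $n$ and $n-2$, together with the algebraic identity coming from expanding $x\mapsto x^l$ via the mean value theorem (responsible for the $\tfrac{2}{l}$, the square root factor and the $\tfrac{1}{1-\alpha_{n-2}}$) --- assembles into $\tilde M_n(l)\cdot\alpha_{n-2}^2$.

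The main obstacle I expect is the precise bookkeeping of all these factors and of the combinatorial structure of the orbits: verifying that each intermediate interval on which Theorem \ref{point} is applied avoids $\underline{0}$ (so that the corresponding iterate is indeed a diffeomorphism), that the sums in the exponents match exactly those in the definitions of $K_{i,n}$ and $C_n$, and that the geometric identity linking the cross-ratio on the left of $\underline{0}$ to $\alpha_{n-2}^2$ still holds without any Schwarzian hypothesis. Since $\tau_{i,n}$ and $\rho_n$ tend to zero by Corollary \ref{gap} and $\sigma(t)\to 0$ as $t\to 0$, the corrections $K_{i,n}$ and $C_n$ remain uniformly bounded; hence \eqref{alfa} recovers asymptotically the same qualitative content as the original estimate proved under negative Schwarzian in \cite{a}.
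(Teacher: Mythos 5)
Your proposal follows essentially the same route as the paper's appendix: rerun the cross-ratio chain of \cite{a} (the $a_n$ steps with $f^{q_{n-1}-1}$ producing the factors $K_{i,n}$, the flat-interval power giving $(\alpha_n)^l$, and one further application with $f^{q_{n-2}-1}$ producing $C_n$), with Theorem \ref{point} supplying the multiplicative corrections that negative Schwarzian used to give for free. Only minor bookkeeping differs from the paper: there the factor $\frac{1}{1-\alpha_{n-2}}$ arises in the estimate of $\nu_{n-2}$ and the $\frac{2}{l}$ and square-root terms come from solving the quadratic inequality furnished by Lemma 3.1 of \cite{a} applied to $\mu_{n-2}$, while the mean value theorem only supplies the telescoping derivative ratios $u_k/v_k$.
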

\begin{proof}
For large $n$,
\begin{displaymath}
\alpha_n^l=\frac{\left|\left(\underline{-q_n+1},\underline{1}\right)\right|}{\left|\left[\underline{-q_n+1},\underline{1}\right)\right|},
\end{displaymath}
which is certainly less than 
\begin{displaymath}
\textbf{Poin}\left(\underline{-q_n+1},(\underline{1},\underline{-q_{n-1}+1}]\right).
\end{displaymath}
By Theorem \ref{point} there exists a constant
\begin{displaymath}
K_{0,n}=e^{\sigma\left(\max_{j=0,\dots,q_{n-1}-2}|f^{j}((\underline{1},\underline{-q_{n-1}+1}])|\right)\sum_{j=0}^{q_{n-1}-2}|f^{j}(\underline{-q_n+1})|}
\end{displaymath}
such that
\begin{equation}\label{ato}
\alpha_n^l\leq K_{0,n}\delta_n(1)\cdot s_n(1),
\end{equation}
with
\begin{displaymath}
\delta_n(k)=\frac{\left|\left(\underline{-q_n+kq_{n-1}},\underline{kq_{n-1}}\right)\right|}{\left|\left[\underline{-q_n+kq_{n-1}},\underline{kq_{n-1}}\right)\right|}
\end{displaymath}
and
\begin{displaymath}
s_n(k)=\frac{\left|\left[\underline{-q_n+kq_{n-1}},\underline{0}\right]\right|}{\left|\left(\underline{-q_n+kq_{n-1}},\underline{0}\right]\right|}.
\end{displaymath}
By the Mean Value Theorem, $f$ transforms the intervals defining $\delta_n(k)$ into a pair whose ratio is
\begin{displaymath}
\left(\frac{u_k}{v_k}\right)\delta_n(k)
\end{displaymath}
where $u_k$ is the derivative of $x^l$ at a point in the interval $U_k$ between $\underline{-q_n+kq_{n-1}}$ and $\underline{kq_{n-1}}$,while $v_k$ is the derivative at a point in 
\begin{displaymath}
V_k=U_k\cup\underline{-q_n+kq_{n-1}}.
\end{displaymath}
Observe that for $n$ sufficiently large,
\begin{displaymath}
u_1<v_1<u_2<v_2<\dots<u_{a_n}<v_{a_n}.
\end{displaymath}
Moreover
\begin{displaymath}
f(\delta_n(k))\leq\textbf{Poin}\left(\underline{-q_n+kq_{n-1}+1},(\underline{kq_{n-1}+1},\underline{-q_{n-1}+1}]\right) 
\end{displaymath}
which by Theorem \ref{point} is less than
\begin{displaymath}
K_{k,n}\textbf{Poin}\left(\underline{-q_n+(k+1)q_{n-1}},(\underline{(k+1)q_{n-1}},\underline{0}]\right).
\end{displaymath}
So, the sequence of inequalities above can be rewritten in the form
\begin{equation}\label{sab}
\frac{u_k}{v_k}\delta_n(k)\leq K_{k,n}\cdot s_n(k+1)\cdot\delta_n(k+1).
\end{equation}
Now, starting from inequality (\ref{ato}) and applying $a_n-1$ times (\ref{sab}) we obtain
\begin{displaymath}
\left(\alpha_n\right)^l\leq K_{0,n}\dots\cdot K_{a_n-1,n}\delta_n(a_n)\cdot\frac{v_{a_n-1}}{u_1}\cdot s_n(1)\dots s_n(a_n).
\end{displaymath}
Note that $s_n(1)\dots s_n(a_n)\leq s_n$ and that
\begin{displaymath}
\frac{v_{a_n-1}}{u_1}\leq\left(\frac{\left|\left(\underline{-q_{n-2}},\underline{0}\right)\right|}{\left|\left(\underline{q_{n-1}},\underline{0}\right)\right|}\right)^{l-1}\leq\frac{\left|\left(\underline{-q_{n-2}},\underline{0}\right)\right|}{\left|\left(\underline{q_{n-1}},\underline{0}\right)\right|}
\end{displaymath}
hence
\begin{displaymath}
\left(\alpha_n\right)^l\leq K_{0,n}\dots\cdot K_{a_n-1,n}\delta_n(a_n)\cdot\frac{\left|\left(\underline{-q_{n-2}},\underline{0}\right)\right|}{\left|\left(\underline{q_{n-1}},\underline{0}\right)\right|}\cdot s_n
\end{displaymath}
which can be rewritten in the form 
\begin{equation}\label{4.}
\left(\alpha_n\right)^l\leq K_{0,n}\dots\cdot K_{a_n-1,n}s_n\nu_{n-2}\mu_{n-2}\alpha_{n-2},
\end{equation}
where
\begin{displaymath}
\nu_{n-2}=\frac{\left|\left[\underline{-q_{n-2}},\underline{0}\right)\right|}{\left|\left(\underline{q_{n-1}},\underline{0}\right)\right|}\cdot\frac{\left|\left[\underline{-q_{n-2}},\underline{0}\right)\right|}{\left|\left[\underline{-q_{n-2}},\underline{a_nq_{n-1}}\right)\right|},
\end{displaymath}
and
\begin{displaymath}
\mu_{n-2}=\frac{\left|\left(\underline{-q_{n-2}},\underline{a_nq_{n-1}}\right)\right|}{\left|\left(\underline{-q_{n-2}},\underline{0}\right)\right|}.
\end{displaymath}
To conclude the proof we have to estimate $\nu_{n-2}$ and $\mu_{n-2}$.\\
For $\nu_{n-2}$, observe that 
\begin{displaymath}
\left|\left[\underline{-q_{n-2}},\underline{0}\right)\right|\leq\left|\left(\underline{q_{n-3}},\underline{0}\right)\right|,
\end{displaymath}
so that
\begin{equation}\label{3.}
\nu_{n-2}\leq\frac{1}{\sigma_{n-1}\sigma_{n-2}}\cdot\frac{1}{1-\alpha_{n-2}}.
\end{equation}
To estimate $\mu_{n-2}$, we use an elementary lemma (Lemma 3.1 in \cite{a}).
\begin{lemma}
For any pair of numbers $x>y$ the inequality 
\begin{displaymath}
\frac{x^l-y^l}{x^l}\geq\left(\frac{x-y}{x}\right)\left(l-\frac{l(l-1)}{2}\left(\frac{x-y}{x}\right)\right)
\end{displaymath}
holds.
\end{lemma}
Now, apply $f$ to the intervals defining the ratio $\mu_{n-2}$. By the previous lemma, the resulting ratio is larger than 
\begin{displaymath}
\mu_{n-2}\left(l-\frac{l(l-1)}{2}\mu_{n-2}\right).
\end{displaymath}
The cross-ratio
\begin{displaymath}
\textbf{Poin}\left(\underline{-q_{n-2}+1},(\underline{a_nq_{n-1}+1},\underline{1})\right) 
\end{displaymath}
is larger again; so, by Theorem \ref{point}, there exists a constant $C_n$ such that
\begin{equation}\label{2.}
\mu_{n-2}\left(l-\frac{l(l-1)}{2}\mu_{n-2}\right)\leq C_n\cdot s_{n-1}\cdot\sigma_n\sigma_{n-1}.
\end{equation}
Solving this quadratic inequality we have that 
\begin{equation}\label{1.}
\mu_{n-2}\leq\frac{2}{l}\cdot\left(\frac{1}{1+\sqrt{1-\frac{2(l-1)}{l}C_ns_{n-1}\sigma_n\sigma_{n-1}}}\right)\cdot C_n\cdot s_{n-1}\cdot\frac{\sigma_n}{\sigma_{n-2}}.
\end{equation}
Combining (\ref{4.}), (\ref{3.}), (\ref{2.}) and (\ref{1.}) we conclude the proof.
\end{proof}

\subsection{Convergence of the Sequence $\alpha_n$}
The convergence of $\prod_{j=0}^{n}K_{i,j}$ and $\prod_{j=0}^{n}C_j$ is assured by the following lemma:  \\
\begin{lemma}\label{esp}
There exists $0<\lambda<1$ such that for every $i\in\{0,\dots,a_n-1\}$ and for every $n$ large enough, $K_{i,n}\leq e^{\sigma\left(\lambda^{n-2}\right)\lambda^{n-2}}$.
\end{lemma}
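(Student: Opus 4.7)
The plan: Since $\sigma$ is monotone nondecreasing with $\sigma(t)\to 0$ as $t\to 0$ and the factors inside the exponent of $K_{i,n}$ multiply, the desired estimate $K_{i,n}\le e^{\sigma(\lambda^{n-2})\lambda^{n-2}}$ will reduce to two independent exponential bounds,
\begin{displaymath}
\tau_{i,n}\le\lambda^{n-2}\qquad\text{and}\qquad \sum_{j=0}^{q_{n-1}-2}\bigl|f^{j}(\underline{-q_n+iq_{n-1}+1})\bigr|\le\lambda^{n-2},
\end{displaymath}
uniform in $i\in\{0,\dots,a_n-1\}$, for some fixed $\lambda\in(0,1)$ and all large $n$. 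After absorbing constants by replacing $\lambda$ with a slightly larger number less than $1$, these two bounds combine to the statement.

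For the sum, I would exploit that the $q_{n-1}-1$ iterates $f^{j}(\underline{-q_n+iq_{n-1}+1})=\underline{-q_n+iq_{n-1}+1+j}$ are pairwise disjoint preimages of $U$ whose indices lie in $[q_{n-2}+1,q_n-1]\subset[0,q_n+q_{n-1}-1]$, so they all belong to $\mathscr{I}_{n-1}$. The idea is to apply Proposition \ref{Koebe liv} to a Koebe-space $T$ around $\underline{-q_n+iq_{n-1}+1}$ on which $f^{q_{n-1}-2}$ is a diffeomorphism, using the close-return combinatorics to enlarge $T$ until it captures a comparable-size portion of a gap of a coarser partition (via Proposition \ref{figo} and Lemma \ref{cielo}). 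Bounded distortion then gives $|f^{j}(\underline{-q_n+iq_{n-1}+1})|/|f^{j}(T)|\le C\,|\underline{-q_n+iq_{n-1}+1}|/|T|$, and summing $j$ over the range where the intervals $\{f^{j}(T)\}$ have bounded overlap multiplicity contracts the sum to a quantity of order $|\underline{-q_n+iq_{n-1}+1}|/|T|$, which is exponentially small by Corollary \ref{gap}.

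For $\tau_{i,n}$, the starting point is the identity $I_i:=(\underline{iq_{n-1}+1},\underline{-q_{n-1}+1}]=f\bigl((\underline{iq_{n-1}},\underline{-q_{n-1}}]\bigr)$; the preimage arc is contained in a bounded number of adjacent elements of $\mathscr{P}_{n-1}$ between the orbit point $\underline{iq_{n-1}}$ and the gap $\underline{-q_{n-1}}$, so by Corollary \ref{gap} and Proposition \ref{figo} it has length at most $C\lambda_0^{n-1}$. A second application of Proposition \ref{Koebe liv}, this time to a Koebe neighborhood of $I_i$ on which $f^{q_{n-1}-2}$ is a diffeomorphism, then shows that $|f^{j}(I_i)|$ is uniformly comparable to $|I_i|$ for every $j\in\{0,\dots,q_{n-1}-2\}$, whence $\tau_{i,n}\le C\lambda_0^{n-1}$.

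The hardest part will be the sum estimate. The naive bound $\sum\le q_{n-1}\max_k|\underline{-k}|$ is only strong enough when the gap-decay rate from Corollary \ref{gap} strictly beats the exponential growth rate of $q_{n-1}$, which Corollary \ref{gap} alone does not guarantee for arbitrary bounded-type rotation numbers. The Koebe-distortion device above is meant to replace the sum of $q_{n-1}$ small lengths by a single small ratio, sidestepping this rate-balance issue, but it requires showing that $|T|$ does not itself shrink catastrophically together with the preimage being estimated; verifying this is the main bookkeeping obstacle, and I expect to handle it by iterating the comparability of Proposition \ref{figo} through a bounded number of partition refinements, terminating at the scale controlled by Lemma \ref{cielo}.
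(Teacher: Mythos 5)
Your opening reduction (two separate exponential bounds, one for $\tau_{i,n}$ and one for the sum in the exponent of $K_{i,n}$) matches the structure of the paper's proof, but both of your proposed arguments break at a decisive point, and the paper needs none of the machinery you invoke. For the $\tau_{i,n}$ bound: for $i\geq 1$ the arc whose $f$-image is $I_i=(\underline{iq_{n-1}+1},\underline{-q_{n-1}+1}]$ must contain the closure of the flat interval, because $\underline{iq_{n-1}}$ and $\underline{-q_{n-1}}$ lie on opposite sides of $\underline{0}$ and their images lie on opposite sides of $\underline{1}=f(U)$; hence that arc has length at least $\left|U\right|$, a fixed constant, and is certainly not of size $C\lambda_0^{\,n-1}$ nor covered by boundedly many elements of $\mathscr P_{n-1}$ of small total length. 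The smallness of $I_i$ does not come from smallness of its $f$-preimage (it comes from the collapse of $U$), so this step of your plan is false as stated; the subsequent Koebe step claiming $|f^j(I_i)|$ uniformly comparable to $|I_i|$ for all $j\leq q_{n-1}-2$ is both unjustified and unnecessary. The paper's proof is one line: each interval $f^{j}\left(\left(\underline{iq_{n-1}+1},\underline{-q_{n-1}+1}\right]\right)$ is contained in a gap of $\mathscr P_{n-2}$, so Corollary \ref{gap} bounds $\tau_{i,n}$ by $\lambda^{n-2}$ directly.

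For the sum, the Koebe-ratio device cannot deliver what you want, and the difficulty you call ``the main bookkeeping obstacle'' is exactly where the plan fails rather than a technicality. For $f^{q_{n-1}-2}$ to be a diffeomorphism on $T$, the interval $T$ cannot contain preimages of $U$ of order smaller than $q_{n-1}-2$, so $T$ is essentially confined to a single gap of $\mathscr P_{n-3}$ (plus its adjacent preimages); thus $T$ exceeds $\underline{-q_n+iq_{n-1}+1}$ by only a bounded number of refinement levels, and the ratio $\left|\underline{-q_n+iq_{n-1}+1}\right|/\left|T\right|$ is not controlled by Corollary \ref{gap}, which bounds absolute gap lengths, not ratios of an interval to a slightly coarser one. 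A priori this ratio need not be exponentially small: in the bounded-geometry case it is bounded below, and in the case at hand ($l\leq 2$) its decay is essentially the statement the Appendix is in the middle of proving, so invoking it would be circular. The paper's argument uses no distortion estimate at all: it observes that the intervals $f^{j}(\underline{-q_n+iq_{n-1}+1})$, $j=0,\dots,q_{n-1}-2$, are pairwise disjoint preimages of $U$ of order larger than $q_{n-2}$, lying inside the gap structure of a partition whose level grows linearly in $n$ (the paper phrases this as containment in a gap of $\mathscr P_{n-1}$), and then applies Corollary \ref{gap} to bound the whole sum by $\lambda^{n-2}$; together with the monotonicity of $\sigma$ this gives $K_{i,n}\leq e^{\sigma(\lambda^{n-2})\lambda^{n-2}}$. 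The idea you are missing is precisely this combinatorial one: disjointness plus containment at level comparable to $n$ replaces any attempt to beat the count $q_{n-1}$ by distortion.
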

\begin{proof}
Observe that, for every $j\in\{0,\dots,q_{n-1}-2\}$
\begin{enumerate}
\item the intervals $f^{j}(\underline{-q_n+iq_{n-1}+1})$ are disjoint and $\sum_{j=0}^{q_{n-1}-2}\left|f^{j}(\underline{-q_n+iq_{n-1}+1})\right|$ is contained in a gap of the partition $\mathscr{P}_{n-1}$,
\item every interval $f^{j}((\underline{iq_{n-1}+1},\underline{-q_{n-1}+1}])$ is contained in a gap of the partition $\mathscr{P}_{n-2}$.
\end{enumerate}
So, by Corollary \ref{gap}, for large $n$
\begin{displaymath}
\sum_{j=0}^{q_{n-1}-2}\left|f^{j}(\underline{-q_n+iq_{n-1}+1})\right|<\lambda^{n-2}
\end{displaymath}
and
\begin{displaymath}
\sigma\left(\max_{j=0,\dots,q_{n-1}-2}|f^{j}\left(\left(\underline{iq_{n-1}+1},\underline{-q_{n-1}+1}\right]\right)|\right)<\sigma\left(\lambda^{n-2}\right).
\end{displaymath}
\end{proof}
A similar argument shows that also $C_n\leq e^{\sigma\left(\lambda^{n-2}\right)\lambda^{n-2}}$ for large $n$.\\

To establish the convergence of the sequence $\alpha_n$ we have to prove that $\prod_{i=0}^{n}\tilde{M_i}$ tends to zero.\\
Observe that $\tilde{M_n}\left(l\right)=M_n\left(l\right)T_n\left(l\right)$ with
\begin{displaymath}
M_n\left(l\right)=s_{n-1}^2\cdot\frac{2}{l}\cdot\left(\frac{1}{1+\sqrt{1-\frac{2(l-1)}{l}s_{n-1}\alpha_{n-1}}}\right)\cdot\frac{1}{1-\alpha_{n-2}}\cdot\frac{\sigma_n}{\sigma_{n-2}}
\end{displaymath}
and
\begin{displaymath}
T_n\left(l\right)=\frac{1+\sqrt{1-\frac{2(l-1)}{l}s_{n-1}\alpha_{n-1}}}{1+\sqrt{1-\frac{2(l-1)}{l}C_ns_{n-1}\alpha_{n-1}}}.
\end{displaymath}
The proof that $\prod_{j=0}^{n}M_j$ tends to zero is exactly the same as in \cite{a}. It remains to determine the size of $T_n(l)$.\\
Noting that
\begin{displaymath}
T_n(l)\leq\max{\left(1,\frac{\sqrt{1-\frac{2(l-1)}{l}s_{n-1}\alpha_{n-1}}}{\sqrt{1-\frac{2(l-1)}{l}C_ns_{n-1}\alpha_{n-1}}}\right)}
\end{displaymath}
it all comes down to the study of the function
\begin{displaymath}
f(x)=\frac{1-x}{1-C_nx}
\end{displaymath}
for $C_n> 1$ tending to one and $x=\frac{2(l-1)}{l}s_{n-1}\alpha_{n-1}$.\\
Observe that $f'(x)>0$ on $\left(0, c_n^{-1}\right)$ and that by the a priori estimates of \cite{a}, $x\leq 0.55$.\\
Therefore,
\begin{displaymath}
T_{n}(l)\leq 1+1.3\left(C_n-1\right).
\end{displaymath}
So, by Lemma \ref{esp}, $\prod_{i=0}^{n}T_i$ is bounded. \\

So, for $l\leq2$ the recursive inequality $\ref{alfa}$ implies that the sequence $\alpha_n$ goes to zero at least exponentially fast. Since $\alpha_n>\tau_n$ the same holds also for the scalings $\tau_n$.
\subsection*{Acknowledgments}
I would sincerely thank Prof. J. Graczyk for introducing me to the subject of this paper, his valuable advice and continuous encouragement. I am also very grateful to Prof. G. \'Swi\c atek for many helpful discussions.

\end{document}